\newcommand{\Dl}{\ensuremath{\Delta} }
\newcommand{\simp}{\Delta^\comp}
\newcommand{\mc}[1]{\mathcal{#1}}
\newcommand{\mrm}[1]{\mathrm{#1}}
\newcommand{\mbf}[1]{\mathbf{#1}}
\newcommand{\comp}{{\scriptscriptstyle \ensuremath{\circ}}}
\newtheorem{thm}{\bf T{\footnotesize HEOREM}}[section]
\newtheorem{lema}[thm]{\bf L{\footnotesize EMMA}}
\newtheorem{prop}[thm]{\bf P{\footnotesize ROPOSITION}}
\newtheorem{cor}[thm]{\bf C{\footnotesize OROLLARY}}
\theoremstyle{definition}
\newtheorem{defi}[thm]{\bf D{\footnotesize EFINITI{O}N}}
\newtheorem{obs}[thm]{\bf R{\footnotesize EMARK}}
\newtheorem{ej}[thm]{\bf E{\footnotesize XAMPLE}}
\newtheorem{ejs}[thm]{\bf E{\footnotesize XAMPLES}}
\begin{document}

\title{Triangulated structures induced by simplicial descent categories}
\author{Beatriz Rodr\'iguez Gonz\'alez\footnote{Work on this paper was partially supported by the research projects `ERC Starting Grant TGASS', \href{http://www.grupo.us.es/gfqm218}{`Geometr\'{\i}a Algebraica, Sistemas Diferenciales y Singularidades' FQM-218}, by MTM2007-66929 and by FEDER}\mbox{}\footnote{email: rgbea@imaff.cfmac.csic.es}\\
\begin{small}Instituto de Ciencias Matem{\'a}ticas (ICMAT)\end{small}\\
\begin{small}CSIC-UAM-UC3M-UCM\end{small}}
\date{}

\maketitle

\begin{abstract} The present paper is devoted to study the homotopy category associated with a simplicial descent category $(\mc{D},\mbf{s},\mrm{E})$ \cite{R1}. We prove that the class $\mrm{E}$ of equivalences has a calculus of left fractions over a quotient category of $\mc{D}$ modulo homotopy. We study the fiber/cofiber sequences induced by a (co)simplicial descent structure. Examples of such fiber/cofiber sequences are deduced for (commutative) differential graded algebras, simplicial sets or topological spaces. We prove that the homotopy category of a stable simplicial descent category is triangulated. In addition, these triangulated structures may be extended to the homotopy categories of diagram categories of $\mc{D}$. As a corollary, we obtain the triangulated structures on: (filtered) derived categories of abelian categories, the derived category of DG-modules over a DG-category, the stable derived category of fibrant spectra and the localized category of mixed Hodge complexes.
\end{abstract}

\section*{Introduction}

 Given a category $\mc{D}$ and a class of equivalences $\mrm{E}$, there are multiple approaches to induce homological or homotopical structure on the associated homotopy category $Ho\mc{D}=\mc{D}[\mrm{E}^{-1}]$. A common feature among the induced properties uses to be the existence of fiber or cofiber sequences in $Ho\mc{D}$.

 Classical examples are the properties of the fiber/cofiber sequences of topological spaces or simplicial sets (see \cite[chapter V]{GZ} to see a treatment based on 2-categories and groupoids). In the additive/abelian context, fiber and cofiber sequences agree and are called `distinguished triangles'. Verdier summarized their properties in the notion of \textit{triangulated category}, following Grothendieck ideas.

 In \cite{Q}, Quillen presented the notion of model categories as a general framework to do homological algebra. Among other applications, a Quillen model structure on a pointed category $\mc{M}$ produces fiber and cofiber sequences in $\mc{M}$. In the stable case, the homotopy category $Ho\mc{M}$ of a Quillen model category is triagulated in the sense of Verdier.

 Later, Grothendieck developed the theory of (triangulated) \textit{derivators} in an unpublished manuscript, that is being edited by Maltsiniotis \cite{Malt}. One of the relevant properties of a triagulated derivator is that it produces compatible triagulated structures on all $Ho (I,\mc{D})$, where $(I,\mc{D})$ are categories of diagrams in $\mc{D}$.

 The present paper is devoted to study the cofiber sequences in a category $\mc{D}$ that are induced by the natural cofiber sequences existing in $\simp\mc{D}$. Just assuming that $\mc{D}$ has finite coproducts and final object, then there are cofiber sequences in $\simp\mc{D}$. They are defined through the classical cone and suspension functors (that are constructed as in $\simp Set$). We carry them to $\mc{D}$ using a sort of `simple' (or total) functor $\mbf{s}:\simp\mc{D}\rightarrow\mc{D}$. More concretely, we assume that $(\mc{D},\mrm{E},\mbf{s})$ is a \textit{\textit{simplicial descent category}} \cite{R1}. Some of the most remarkable axioms they must satisfy are the following\\
 \textsc{Coproducts}: $\mc{D}$ has finite coproducts. The simple functor commutes with coproducts up to equivalence, and $\mrm{E}\sqcup\mrm{E}\subseteq\mrm{E}$.\\
 \textsc{Exactness}: The simple of a degree-wise equivalence is an equivalence.\\
 \textsc{Acyclicity}: The simple of the simplicial cone of a map $f$ is acyclic if and only if $f\in\mrm{E}$.\\
 \textsc{Eilenberg-Zilber}: The iterated simple of a bisimplicial object is equivalent to the simple of its diagonal.

 Here we prove that a simplicial descent category $\mc{D}$ inherits from $\simp\mc{D}$ the homotopic properties one would expect. First of all, we consider the homotopy relation on $\mc{D}$ defined through the induced cylinder. The second section of the present paper contains the

 \noindent {\bf T{\footnotesize HEOREM} \ref{MorfHoD} }
 \textit{If $(\mc{D},\mrm{E},\mbf{s})$ is a simplicial descent category, then the class $\mrm{E}$ has a calculus of left fractions in the quotient category of $\mc{D}$ modulo homotopy.}

 In the third section we study the cofiber sequences in $Ho\mc{D}$ induced by those of $\simp\mc{D}$. Although $Ho\mc{D}$ may be non-additive (even non-pointed), we define an `homotopical' minus sign $m_B:\Sigma B\rightarrow\Sigma B$. On the other hand, the suspension functor $\Sigma$ does not need to be an equivalence of categories. However, all `non-stable' axioms of triangulated category are satisfied by our cofiber sequences. Instead of the classical TR 2, it holds that whenever $A\stackrel{f}{\rightarrow} B\rightarrow C\rightarrow \Sigma A$ is a cofiber sequence, then so is $B\rightarrow C\rightarrow \Sigma A\stackrel{m\Sigma f}{\rightarrow} \Sigma B$.
 Dually, if our category $\mc{D}$ is a cosimplicial descent category, we construct \textit{fiber sequences} with the dual properties. In particular, the result applies to non-pointed examples as differential graded algebras and commutative differential graded algebras over a field of zero characteristic.

 In case our simplicial descent category is additive, the previous properties mean that $Ho\mc{D}$ is a suspended category \cite{KV}. As an application, we get suspended category structures on the filtered derived category of uniformly-bounded-below cochain complexes, filtered by a birregular filtration. This allows us to deduce the usual triangulated category structure on the derived category of bounded-below birregularly-filtered complexes. Using the same technique, we also obtain a triagulated structure on the category of bounded-below birregularly-filtered complexes localized with respect to the $E_2$-isomorphisms. In addition, Deligne's `decalage' functor is a triangulated functor between these two localized categories. We restrict our attention to birregular filtrations because we are interested in mixed Hodge complexes.

 In section 4 we treat the pointed case, that is, a simplicial descent category $\mc{D}$ such that $Ho\mc{D}$ is pointed. We prove that the suspension $\Sigma A$ of $A$ is a cogroup object in $Ho\mc{D}$, and that it coacts on the cone of a map $f:A\rightarrow B$.

 Finally, in section 5 we study the stable case, that is, when $\Sigma:Ho\mc{D}\rightarrow Ho\mc{D}$ is an equivalence of categories. In this case, $Ho\mc{D}$ is pointed. Therefore, the group structures on $Hom_{Ho\mc{D}}(\Sigma A,-)$ endow $Ho\mc{D}$ with an additive category structure. Hence, we have the

 \noindent {\bf T{\footnotesize HEOREM} \ref{StablTriang}} \textit{The homotopy category $Ho\mc{D}$ of a stable simplicial descent category is a Verdier triangulated category.}

 Moreover, the resulting triangulated structures extend to diagram categories as happens with derivators.

 \noindent {\bf C{\footnotesize OROLLARY} \ref{cORStablTriang}} \textit{Let $\mc{D}$ be a stable simplicial descent category such that $\Sigma:\mc{D}\rightarrow \mc{D}$ is an equivalence of categories. Then, if $I$ is a small category, the category of diagrams $(I,\mc{D})$ is a stable simplicial descent category. In particular, $Ho(I,\mc{D})$ is a Verdier's triangulated category, and any functor $f:I\rightarrow J$ induces a triangulated functor $f^\ast: Ho(J,\mc{D})\rightarrow Ho(I,\mc{D})$.}

 Using the simplicial and cosimplicial descent categories exhibited in \cite{R1}, the previous theorem produces the well-known triangulated structures on

 \begin{itemize}
 \item The homotopy category of an additive category, and the derived category of an abelian category.
 \item The derived category of DG-modules over a DG-category.
 \item The filtered derived category of an abelian category, and the category of filtered cochain complexes localized with respect to $E_2$-isomorphisms.
 \item The derived category of mixed Hodge complexes.
 \item The stable derived category of fibrant spectra.
 \end{itemize}

I wish to thank L. Narv\'aez Macarro and V. Navarro Aznar for their helpful advice, expert guidance and dedication.

\section{Simplicial descent categories}

Let $\simp\mc{D}$ (resp. $\simp\simp\mc{D}$) be the category of simplicial (resp. bisimplicial) objects in a fixed category $\mc{D}$ (see, for instance \cite{GZ}). The constant simplicial object defined by an object $A$ of $\mc{D}$ will be denoted by $A\times \Dl$. Also, we denote by $\Upsilon:\simp\mc{D}\rightarrow\simp\mc{D}$ the `order inverse' functor. It is defined as $(\Upsilon X)_n=X_n$, $\Upsilon(d^n_i)=d^n_{n-i}$ and $s^n_j=s^n_{n-j}$.

\begin{defi}\cite[2.2]{R1}
A \textit{simplicial descent category} is the data $(\mc{D},\mrm{E},\mbf{s},\mu,\lambda)$ satisfying
the following axioms\\%
$\mathbf{(S\; 1)}$ $\mc{D}$ is a category with finite
coproducts, initial object $0$ and final object $\ast$.\\[0.1cm]
$\mathbf{(S\; 2)}$ $\mrm{E}$ is a saturated class of morphisms
in $\mc{D}$, called \textit{equivalences}, stable by coproducts (that is
$\mrm{E}\sqcup\mrm{E}\subseteq \mrm{E}$). An object $A$ is \textit{acyclic} if $A\rightarrow\ast$ is in $\mrm{E}$.\\[0.1cm]
$\mathbf{(S\; 3)}$ $\mbf{s}:\simp\mc{D}\rightarrow \mc{D}$ is a functor, called the \textit{simple functor}, which commutes with coproducts up to equivalence.\\[0.1cm]
$\mathbf{(S\; 4)}$ $\mu:\mbf{s}\mrm{D}\rightarrow\mbf{s}\mathbf{s}$ is an isomorphism
of $Fun(\simp\simp\mc{D},\mc{D})[\mrm{E}^{-1}]$, the category of functors from $\simp\simp\mc{D}$ to $\mc{D}$ localized by the object-wise equivalences. If $Z\in\simp\simp\mc{D}$, then $\mbf{s}\mrm{D}Z$ is the simple of the diagonal of $Z$. On the other hand
$\mbf{s}\mbf{s}Z=\mbf{s}(n\rightarrow\mbf{s}(m\rightarrow
Z_{n,m}))$ is the iterated simple of $Z$.\\[0.1cm]
$\mathbf{(S\; 5)}$ $\lambda:\mbf{s}(-\times\Dl)\rightarrow
Id_{\mc{D}}$ is a natural transformation such that $\lambda_X\in \mrm{E}$ for all $X\in\mc{D}$. In addition, there exists $\rho:Id_{\mc{D}}\rightarrow \mbf{s}(-\times\Dl)$ with $\rho\lambda=Id$.
We will write $\mrm{R}:=\mbf{s}(-\times\Dl):\mc{D}\rightarrow\mc{D}$.\\[0.1cm]
$\mathbf{(S\; 6)}$ If $f:X\rightarrow Y$ is a morphism in
$\simp\mc{D}$ with $f_n \in \mrm{E}$ for all $n$,
then $\mbf{s}(f)\in\mrm{E}$.\\[0.1cm]
$\mathbf{(S\; 7)}$
If $f:A\rightarrow B$ is a morphism in $\mc{D}$, then
$f\in\mrm{E}$ if and only if $\mbf{s}C(f\times\Dl)$, the simple of its simplicial
cone, is acyclic.\\[0.1cm]
$\mathbf{(S\; 8)}$ It holds that $\mbf {s}\Upsilon f\in\mrm{E}$
if (and only if) $\mbf{s}f\in\mrm{E}$, where
$\Upsilon:\simp\mc{D}\rightarrow\simp\mc{D}$ is the inverse order functor.\\[0.1cm]
The transformations $\mu$ and $\lambda$ of $\mathbf{(S\; 4)}$ and $\mathbf{(S\; 5)}$ should be compatible in the following sense. If $X\in\simp\mc{D}$, then the
compositions below are equal to the identity in $Fun(\mc{D},\mc{D})[\mrm{E}^{-1}]$.
\begin{equation}\label{compatibLambdaMuEquac}\xymatrix@M=4pt@H=4pt@R=7pt@C=25pt{
 \mbf{s}X \ar[r]^-{\mu_{\Dl\times X}} &  \mrm{R}\mbf{s}X\ar[r]^-{\lambda_{\mbf{s}X}} & \mbf{s}X &
 \mbf{s}X \ar[r]^-{\mu_{X\times \Dl}} &  \mbf{s}(\mrm{R}X)\ar[r]^-{\mbf{s}(\lambda_{X})} &
 \mbf{s}X}\end{equation}
A \textit{cosimplicial descent category} is a category $\mc{D}$ such that its opposite category $\mc{D}^{op}$ is a simplicial descent category.
\end{defi}

\begin{defi}
A \textit{descent functor} between simplicial descent categories $(\mc{D},\mbf{s},\mrm{E},\mu,\lambda)$ and
$(\mc{D'},\mrm{E'},\mbf{s'},\mu',\lambda')$ is a functor
$\psi:\mc{D}\rightarrow \mc{D}'$ such that\\
\textbf{I.-} $\psi(\mrm{E})\subseteq \mrm{E}'$.\\
\textbf{II.-} The canonical map $\psi(A)\sqcup\psi(B)\rightarrow\psi(A\sqcup B)$ is
in $\mrm{E}'$ for each $A,B\in\mc{D}$.\\
\textbf{III.-} There exists an isomorphism $\Theta:\psi\mbf{s}\rightarrow\mbf{s}'\psi$ of $Fun(\simp\mc{D},\mc{D}')[\mrm{E}'^{-1}]$, compatible with $\lambda$, $\lambda'$
and with $\mu$, $\mu'$ (see \cite{R1} for the explicit compatibility conditions).\\
We will also assume that $\psi$ preserves final objects, that is $\psi(\ast)\rightarrow\ast'$ is in $\mrm{E}$.
\end{defi}


Next we study the cylinder objects in $\simp\mc{D}$, and those induced by them in $\mc{D}$. Consider the natural action
$\simp Set\times \simp\mc{D}\rightarrow \simp\mc{D}$, given by $(K\boxtimes X)_n=\coprod_{K_n}X_n$.

If $f:X\rightarrow Y$ and $g:X\rightarrow Z$ are maps in $\simp\mc{D}$,
then $Cyl(f,g)$ denotes the pushout of $f\sqcup g:X\sqcup X\rightarrow Y\sqcup Z$ along $d^0\sqcup d^1:X\sqcup X\rightarrow X\boxtimes \Dl[1]$, so $Cyl(f,g)_n=Y_n\sqcup \coprod^n X_n\sqcup Z_n$. If $g:X\rightarrow \ast$ is the trivial map, then $Cyl(f,g)$ is the \textit{cone} of $f$, denoted by $C(f)$.

We next recall a basic property of $Cyl$ that will be often used in section \ref{Cofsequenc}.

Consider the following commutative diagram in $\simp\mc{D}$
$$\xymatrix@M=4pt@H=4pt@R=12pt@C=12pt{Z'  & X'\ar[l]_{g'}\ar[r]^{f'}   & Y' \\
                        Z  \ar[u]^{\alpha} \ar[d]_{\alpha'}& X  \ar[l]_{g}\ar[r]^{f} \ar[u]^{\beta} \ar[d]_{\beta'} & Y \ar[u]^{\gamma} \ar[d]_{\gamma'} \\
                        Z'' & X''\ar[l]_{g''}\ar[r]^{f''} & Y''}$$
Applying $Cyl$ by rows and columns we obtain
$$\xymatrix@M=4pt@H=4pt@R=7pt@C=12pt{
 Cyl(f',g')& Cyl(f,g)\ar[l]_-{\delta}\ar[r]^-{\delta'}& Cyl(f'',g'') &  Cyl(\alpha',\alpha)&
 Cyl(\beta',\beta)\ar[l]_-{\widehat{g}}\ar[r]^-{\widehat{f}} & Cyl(\gamma',\gamma)}$$
Let $\psi:Cyl(\gamma',\gamma)\rightarrow Cyl(\delta',\delta)$ and
$\psi':Cyl(f'',g'')\rightarrow Cyl(\widehat{f},\widehat{g})$ be
the respective images under $Cyl$ of:
$$\xymatrix@M=4pt@H=4pt@R=15pt@C=12pt{Y'\ar[d]_{I}& Y\ar[l]_-{\gamma}\ar[r]^-{\gamma'}\ar[d]_{I} & Y''\ar[d]_{I}\ar@{}[rd]|{;} & Z''\ar[d]_{I}& X''\ar[l]_-{g''}\ar[r]^-{f''}\ar[d]_{I} & Y''\ar[d]_{I}\\
                         Cyl(f',g')& Cyl(f,g)\ar[l]_-{\delta}\ar[r]^-{\delta'}& Cyl(f'',g'')                                          & Cyl(\alpha',\alpha)& Cyl(\beta',\beta)\ar[l]_-{\widehat{g}}\ar[r]^-{\widehat{f}}& Cyl(\gamma',\gamma)}$$
where each $I$ means the corresponding canonical map.

\begin{lema}\textrm{\cite[0.1.2]{R1}}\label{CilindroIterado}
There exists a natural isomorphism of simplicial objects
$\Theta:Cyl(\delta',\delta){\rightarrow}
Cyl(\widehat{f},\widehat{g})$, such that $\Theta I=\psi'$ and
$\Theta \psi=I$.
\end{lema}

From now on, $(\mc{D},\mrm{E},\mbf{s},\mu,\lambda)$ denotes a simplicial descent category.
Given an object $A$ in $\mc{D}$, we can consider the associated constant simplicial object $A\times\Dl\in\simp\mc{D}$. Since $\mc{D}$ has finite coproducts, the classical cylinder object $A\boxtimes\Dl[1]$ gives rise to the diagram
$$\xymatrix@R=15pt@C=20pt{A\times\Dl \ar@<0.5ex>[r]^-{d^0}
\ar@<-0.5ex>[r]_-{d^1}& \Dl[1]\boxtimes A \ar[r]^-{s^0} & A\times\Dl }\mbox{ with } s^0 d^0=s^0 d^1 =Id_A$$
Set $cyl(A)=\mbf{s}(\Dl[1]\boxtimes A)$. Applying the simple functor $\mbf{s}$ to the above diagram, and composing $\mbf{s}d_0$, $\mbf{s}d_1$ with $\rho_A$ and ${s_0}$ with $\lambda_A$, we get the following diagram in $\mc{D}$
\begin{equation}\label{cylX}\xymatrix@R=15pt@C=20pt{A\ar@<0.5ex>[r]^-{i_A}
\ar@<-0.5ex>[r]_-{j_A}& cyl{A}\ar[r]^{\sigma_A} & A }\mbox{ with } \sigma_A i_A=\sigma_A j_A =Id\end{equation}

\begin{defi}
If $C\stackrel{g}{\leftarrow}A\stackrel{f}{\rightarrow} B$ are maps in $\mc{D}$, we will write $cyl(f,g)$ for $\mbf{s}Cyl(f\times\Dl,g\times\Dl)$. As before, the simplicial maps $J_C:C\times\Dl\rightarrow Cyl(f,g)$ and $I_B:B\times\Dl\rightarrow Cyl(f,g)$ provide $j_C:C\rightarrow cyl(f,g)$ and  $i_B:B\rightarrow cyl(f,g)$, where $i_C=\mbf{s}J_C\rho_C$ and $i_B=\mbf{s}I_B \rho_B$. In this way $cyl$ gives rise to a functor that maps the pair $(f,g)$ to the data $(cyl(f,g),i_B,j_C)$.
\end{defi}

Note that if $p:B\rightarrow T$, $q:C\rightarrow T$ are such that $pf=qg$, then there is a natural $r:cyl(f,g)\rightarrow T$ with $r j_C=q$ and $r i_B=p$. Indeed, just take $r=\sigma_T cyl(q,Id_A,p)$, where $cyl(q,Id_A,p):cyl(f,g)\rightarrow cyl(T)$ is the map induced by $q$, $Id_A$ and $p$.

Next we recall some properties concerning functor $cyl$.

\begin{lema}\label{C3}\textrm{\cite[0.2.7]{R1}} Consider a commutative diagram in $\simp\mc{D}$
$$\xymatrix@M=4pt@H=4pt@C=15pt@R=15pt{ Z \ar[d]_-{\alpha} & X \ar[r]^-{f} \ar[l]_-{g}\ar[d]_{\beta} &  Y \ar[d]^{\gamma}\\
                                       Z'                 & X' \ar[r]^-{f'} \ar[l]_-{g'}             &  Y'   ,}$$
such that $\mbf{s}\alpha$, $\mbf{s}\beta$ and $\mbf{s}\gamma$ are equivalences. Then the induced morphism $\mbf{s}Cyl(f,g)\rightarrow
\mbf{s}Cyl(f',g')$ is also in $\mrm{E}$. In particular, if $\alpha$, $\beta$ and $\gamma$ are constant maps which are equivalences, then the induced map $cyl(f,g)\rightarrow cyl(f',g')$ is so.
\end{lema}

\begin{lema}\label{sCyls}\textrm{\cite[0.2.8]{R1}} If $f:X\rightarrow Y$ and $g:X\rightarrow Z$ are maps
in $\simp\mc{D}$, then $\mbf{s}Cyl(f,g)$ is naturally isomorphic
to $T=\mbf{s}C((\mbf{s}f)\times\Dl,(\mbf{s}g)\times\Dl)$ in
$Ho\mc{D}$. This isomorphism commutes with the respective
canonical maps from $\mrm{R}\mbf{s}Y$, $\mrm{R}\mbf{s}Z$, $\mbf{s}Y$,
and $\mbf{s}Z$ to $T$ and $\mbf{s}Cyl(f,g)$.
\end{lema}

\begin{lema}\cite[0.2.9]{R1}\label{C2}
Given maps $C\stackrel{g}{\leftarrow}A\stackrel{f}{\rightarrow} B$ in $\mc{D}$, then\\
\textbf{a)} $f\in\mrm{E}$ implies that $j:C\rightarrow cyl(f,g)$ is an equivalence too.\\
\textbf{b)} $g\in\mrm{E}$ implies that $i:B\rightarrow cyl(f,g)$ is an equivalence too.
\end{lema}

In particular, the maps $i_A,j_A$ and $\sigma_A$ in (\ref{cylX}) are equivalences.


\section{The homotopy category}
In this section we describe the morphisms in the localized category $Ho\mc{D}=\mc{D}[\mrm{E}^{-1}]$. We prove that $\mrm{E}$ has a calculus of left fractions over a quotient of $\mc{D}$ modulo homotopy. Then, any map in $Ho\mc{D}=\mc{D}[\mrm{E}^{-1}]$ is represented by a `roof' $A\rightarrow T\stackrel{\thicksim}{\leftarrow} B$.

To this end, we would like to consider homotopies between maps
$f,g:X\rightarrow B$ in $\mc{D}$ defined through the cylinder $cyl(X)$. The
problem is that the relation obtained in this way is not transitive in general. To solve this problem, we will consider the associated equivalence relation, or equivalently, we will enlarge the set of cylinders than can be used to define homotopies.

Assume that $\xymatrix@C=15pt@R=15pt{A \ar@<0.5ex>[r]^-{i} \ar@<-0.5ex>[r]_-{j}& \widetilde{A} \ar[r]^-{\nu}& A}$ and
$\xymatrix@C=15pt@R=15pt{A \ar@<0.5ex>[r]^-{p}
\ar@<-0.5ex>[r]_-{q}& \widehat{A} \ar[r]^-{\varrho}& A}$ are
diagrams such that $\nu i=\nu j=\varrho p=\varrho q=Id_A$.
We can `\textit{glue}' them obtaining a new diagram $\xymatrix@C=15pt@R=15pt{A
\ar@<0.5ex>[r]^-{s} \ar@<-0.5ex>[r]_-{t}& \overline{A}
\ar[r]^-{\eta}& A}$ with $\eta s=\eta t=Id$, in the following way. Applying $cyl$ to
$\xymatrix@C=15pt@R=15pt{\widetilde{A} & \ar[r]^-{p}  A \ar[l]_-{j} &
\widehat{A}}$ we get
$\xymatrix@C=15pt@R=15pt{\widetilde{A} \ar[r]^-{\alpha} & \overline{A} & \ar[l]_{\beta} \widehat{A}}$. This data can be summarized in the diagram
$$\xymatrix@C=17pt@R=4pt{
 A \ar[rd]^{i} & & & \\
 & \widetilde{A} \ar[rd]^-{\alpha} \ar@/^1pc/[rrd]^-{\nu} & \\
 A\ar[ru]^-{j}\ar[rd]^-{p} & & \overline{A}\ar@{.>}[r]^-{\eta} & A\\
 & \widehat{A} \ar[ru]^-{\beta}\ar@/_1pc/[rru]_-{\varrho} & & \\
 A \ar[ru]^-{q} & & &}$$
Define $s=\alpha i$ and $t=\beta q$. Let us define
$\eta$. Since $\nu j=\varrho p= Id_A$, then
$cyl(\varrho,Id_A,\nu):cyl(p,j)=\overline{A}\rightarrow cyl(A)$. Set
$\alpha=\nu_A cyl(\varrho,Id_A,\nu)$, where
$\nu_A:cyl(A)\rightarrow A$ is the canonical map. As
$\eta\alpha=\nu$ and $\eta \beta=\varrho$. Then
$s,t,\eta$ are such that $\eta s=\eta t=Id$.

\begin{defi} The set of
\textit{admissible cylinders} of an object $A$ of $\mc{D}$ is defined inductively as follows\\
I) The diagram $\xymatrix@C=15pt@R=15pt{A \ar@<0.5ex>[r]^-{Id} \ar@<-0.5ex>[r]_-{Id}& A \ar[r]^{Id}& A }$ is an admissible cylinder.\\
II) If $\xymatrix@C=15pt@R=15pt{A \ar@<0.5ex>[r]^-{i} \ar@<-0.5ex>[r]_-{j}& \widetilde{A} \ar[r]^-{\sigma}&
 A}$ is an admissible cylinder, so is $\xymatrix@C=15pt@R=15pt{A \ar@<0.5ex>[r]^-{j} \ar@<-0.5ex>[r]_-{i}& \widetilde{A} \ar[r]^-{\sigma}& A}$.\\
III) The result of
gluing two admissible cylinders is again an admissible cylinder.
\end{defi}

\begin{obs}\label{InclusinesenE}
It can be proved inductively that
given an admissible cylinder $\xymatrix@C=15pt@R=15pt{A
\ar@<0.5ex>[r]^-{i} \ar@<-0.5ex>[r]_-{j}& \widetilde{A}
\ar[r]^-{\sigma}& A}$ then $\sigma i=\sigma j=Id$. By lemma \ref{C2}, we deduce that $i$, $j$ and $\sigma$ are in
$\mrm{E}$.
\end{obs}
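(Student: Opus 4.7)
The proof goes by induction on the inductive definition of admissible cylinders, establishing both $\sigma i = \sigma j = \mathrm{Id}_A$ and $i, j, \sigma \in \mrm{E}$ simultaneously. The base case (I) of the trivial cylinder and the symmetry case (II) are tautological (identities and mere swapping of $i, j$). All of the substance is in the gluing step (III).

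Take admissible cylinders $(\widetilde{A}, i, j, \nu)$ and $(\widehat{A}, p, q, \varrho)$ satisfying the inductive hypothesis, and form the glued data $(\overline{A}, s, t, \eta)$ with $\overline{A} = cyl(p, j)$, $\alpha : \widetilde{A} \to \overline{A}$ and $\beta : \widehat{A} \to \overline{A}$ the canonical injections, $s = \alpha i$, $t = \beta q$, and $\eta = \sigma_A \circ cyl(\varrho, \mathrm{Id}_A, \nu)$. For the identities, I would exploit the functoriality of $cyl$: reading $cyl(\varrho, \mathrm{Id}_A, \nu)$ as the map induced by the morphism of spans $(\widetilde{A} \xleftarrow{j} A \xrightarrow{p} \widehat{A}) \to (A \xleftarrow{\mathrm{Id}} A \xrightarrow{\mathrm{Id}} A)$ with vertex components $\nu, \mathrm{Id}_A, \varrho$ (the required commutativities $\nu j = \mathrm{Id}_A = \varrho p$ hold by inductive hypothesis), its composites with $\alpha$ and $\beta$ are the canonical injections $A \to cyl(A)$ precomposed with $\nu$ and $\varrho$ respectively. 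Applying $\sigma_A$ and using $\sigma_A i_A = \sigma_A j_A = \mathrm{Id}_A$ from diagram (\ref{cylX}), we obtain $\eta \alpha = \nu$ and $\eta \beta = \varrho$, whence $\eta s = \nu i = \mathrm{Id}_A$ and $\eta t = \varrho q = \mathrm{Id}_A$.

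For $\mrm{E}$-membership, the inductive hypothesis gives $i, j, \nu, p, q, \varrho \in \mrm{E}$. Applying Lemma \ref{C2} to the span $\widetilde{A} \xleftarrow{j} A \xrightarrow{p} \widehat{A}$, part (a) with $p \in \mrm{E}$ yields $\alpha \in \mrm{E}$, and part (b) with $j \in \mrm{E}$ yields $\beta \in \mrm{E}$. Then $s = \alpha i$ and $t = \beta q$ lie in $\mrm{E}$ by axiom $\mathbf{(S\;2)}$ (closure under composition), and $\eta \in \mrm{E}$ follows from $\eta s = \mathrm{Id}_A \in \mrm{E}$ and $s \in \mrm{E}$ via the 2-out-of-3 property packaged in saturation.

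The only mildly non-routine piece of bookkeeping is correctly parsing $cyl(\varrho, \mathrm{Id}_A, \nu)$ as the functorial image on cylinders and tracing how it interacts with $\alpha$, $\beta$, and the projection $\sigma_A$; once that is in place, everything reduces to the inductive hypothesis, Lemma \ref{C2}, and the axioms on $\mrm{E}$.
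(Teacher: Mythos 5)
Your proof is correct and takes essentially the same route as the paper: the identities $\eta s=\eta t=\mathrm{Id}$ in the gluing step are exactly the computation the paper carries out when constructing glued cylinders (via $\eta=\sigma_A\,cyl(\varrho,\mathrm{Id}_A,\nu)$ and the functoriality of $cyl$), and the $\mrm{E}$-membership is obtained, as intended in the remark, from lemma \ref{C2} applied to the span $\widetilde{A}\stackrel{j}{\leftarrow}A\stackrel{p}{\rightarrow}\widehat{A}$ together with saturation of $\mrm{E}$ (closure under composition and 2-out-of-3).
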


\begin{defi} Two maps $f,g:A\rightarrow B$ are
\textit{homotopic} if there exists an admissible cylinder
$\xymatrix@C=15pt@R=15pt{A \ar@<0.5ex>[r]^-{i}
\ar@<-0.5ex>[r]_-{j}& \widetilde{A} \ar[r]^-{\sigma}& A}$ and
$H:\widetilde{A}\rightarrow B$ such that $H i=f$ and $H j=g$. $H$ is called a \textit{homotopy} from $f$ to $g$.
\end{defi}

\begin{ej}\label{ejemplHomot} Given maps $\xymatrix@C=15pt@R=15pt{C  & A\ar[l]_-g\ar[r]^f &
B}$ in $\mc{D}$, then  $j_C f,i_B g:A\rightarrow cyl(f,g)$ are homotopic maps through $H=cyl(g,Id,f):cyl(A)\rightarrow cyl(f,g)$.
\end{ej}

\begin{lema}\label{relHomotopia}
The homotopy relation on $\mrm{Hom}_{\mc{D}}(A,B)$ is an equivalence relation.
It is compatible with right and left composition of morphisms in $\mc{D}$.
It is also compatible with $\mrm{E}$, that is, if $f$ and $g$ are homotopic maps then $f=g$ in $Ho\mc{D}$. In particular, $f\in\mrm{E}$ if and only if $g\in\mrm{E}$.
\end{lema}

\begin{proof} First assertion follows directly from definitions.
Let $H:\widetilde{A}\rightarrow B$ be a homotopy from $f$ to $g$, where  $i,j:A\rightarrow  \widetilde{A}$ are part of an admissible cylinder $(i,j,\sigma)$ of $A$. If $t:B\rightarrow C$ then $t f$ is clearly
homotopic to $t g$ through the homotopy $t H$. If
$s:S\rightarrow A$,
it can be proved by induction that there exists an admissible
cylinder $\xymatrix@C=15pt@R=15pt{S \ar@<0.5ex>[r]^-{p}
\ar@<-0.5ex>[r]_-{q}& \widetilde{S}}$ and a map $\widetilde{s}:\widetilde{S}\rightarrow \widetilde{A}$ such that $i s=\widetilde{s} p$ and
$j s=\widetilde{s} q$. Hence $H \widetilde{s}$ is a homotopy from $f s$ to $g s$.
To see the last statement, note that $i=j=\sigma^{-1}$ in $Ho\mc{D}$, by remark \ref{InclusinesenE}.
\end{proof}

Denote by $K\mc{D}$ the category with same objects as
$\mc{D}$, and whose morphisms are the morphisms of $\mc{D}$ modulo
homotopy. We will write $[f]$ for the homotopy class in $K\mc{D}$
of a morphism $f$ of $\mc{D}$.

\begin{thm}\label{MorfHoD}
$Ho\mc{D}=\mc{D}[\mrm{E}^{-1}]$
is isomorphic to the localized category of $K\mc{D}$ with respect to the
class $\mrm{E}$ modulo homotopy. In addition the class $\mrm{E}$ admits a
calculus of left fractions in $K\mc{D}$.
\end{thm}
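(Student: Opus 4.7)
The proof naturally splits into two parts: first, showing that the localization of $K\mc{D}$ at $\mrm{E}$ agrees with $Ho\mc{D}$, and second, verifying the Gabriel--Zisman axioms for a calculus of left fractions in $K\mc{D}$.

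For the first part, the plan is to invoke a universal-property argument. By Remark \ref{InclusinesenE} and Lemma \ref{relHomotopia}, any functor $F\colon \mc{D}\to \mc{C}$ inverting $\mrm{E}$ sends the two sections $i,j$ of every admissible cylinder $(i,j,\sigma)$ to the common inverse of $F(\sigma)$, so it identifies homotopic maps and factors uniquely through $K\mc{D}$. Conversely any functor on $K\mc{D}$ inverting $\mrm{E}$ yields one on $\mc{D}$ inverting $\mrm{E}$. Hence $K\mc{D}[\mrm{E}^{-1}]$ and $\mc{D}[\mrm{E}^{-1}]$ share the same universal property and are canonically isomorphic.

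For the second part I verify the three axioms of left fractions in $K\mc{D}$. Axiom LF1 (identities, closure under composition) is immediate from the saturation assumption in $\mathbf{(S\,2)}$. For LF2 (the Ore condition), given $s\colon A\to A'$ in $\mrm{E}$ and $f\colon A\to B$, I take $T:=cyl(f,s)$ together with the canonical maps $j_{A'}\colon A'\to T$ and $i_B\colon B\to T$. By Lemma \ref{C2}(b) the map $i_B$ lies in $\mrm{E}$, and Example \ref{ejemplHomot} gives the homotopy $j_{A'}\circ s \simeq i_B\circ f$, so the square commutes in $K\mc{D}$.

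The main obstacle is LF3: given $s\colon A'\to A$ in $\mrm{E}$ and $f,g\colon A\to B$ with $fs\simeq gs$ via a homotopy $H\colon\widetilde{A'}\to B$, produce $t\in\mrm{E}$ with $tf\simeq tg$. The plan is to splice the canonical homotopy $i_B f\simeq j_B g$ arising from $cyl(f,g)$ together with the given homotopy $H$ by a bisimplicial cylinder construction: one direction carries $Cyl(f\times\Dl,g\times\Dl)$, the other encodes the datum $(H,s)$ through the cylinder of $s$, and $T$ is defined as the simple of an appropriate slice of the resulting bisimplicial object. Lemma \ref{CilindroIterado} together with axiom $\mathbf{(S\,4)}$ lets one compare the iterated and diagonal simples, yielding the natural map $t\colon B\to T$ as the inclusion of a copy of $B$, and the homotopy $tf\simeq tg$ is read off the diagonal of the bisimplicial data. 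The delicate step will be to prove that $t\in\mrm{E}$; this should reduce, via Lemma \ref{C3} and axioms $\mathbf{(S\,6)}$--$\mathbf{(S\,7)}$, to the acyclicity of the simplicial cone of $s$, the compatibility relation (\ref{compatibLambdaMuEquac}) ensuring that the two cylinder directions interact consistently.
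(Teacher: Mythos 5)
Your first part (the universal-property identification of $K\mc{D}[\mrm{E}^{-1}]$ with $Ho\mc{D}$) and your verification of the Ore condition via $cyl(f,s)$, Lemma \ref{C2} and Example \ref{ejemplHomot} coincide with the paper's argument and are fine. The genuine gap is in LF3, which is precisely the part of the theorem that requires real work: what you offer there is a plan, not a proof. You never pin down the object $T$ (``the simple of an appropriate slice of the resulting bisimplicial object'' is not a construction), you do not exhibit the homotopy $tf\simeq tg$ as a homotopy in the sense of the paper --- i.e.\ through an \emph{admissible} cylinder of $A$, which is the whole point of having enlarged the class of cylinders by gluing --- and you explicitly defer the crucial claim $t\in\mrm{E}$, saying only that it ``should reduce'' to acyclicity of the cone of $s$ via $\mathbf{(S\,6)}$--$\mathbf{(S\,7)}$. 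That deferred step is not routine, and your proposed reduction does not obviously go through: in the paper the equivalences are obtained from Lemma \ref{C2} applied to the various cylinders together with the two-out-of-three property of the saturated class $\mrm{E}$, not from a direct appeal to $\mathbf{(S\,7)}$, and no bisimplicial/$\mathbf{(S\,4)}$ comparison is needed at all.

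You also miss a structural subtlety that the paper's construction is designed to handle. Splicing the given homotopy $H$ with the canonical cylinder data produces \emph{two} natural maps out of $B$ (one attached to the $f$-side, one to the $g$-side of the spliced diagram); to get a single fraction $t'\in\mrm{E}$ in $K\mc{D}$ with $t'f=t'g$ one must show these two maps are homotopic. The paper does this by applying $cyl$ first by rows of a $3\times 3$ diagram (using the retractions $\gamma:cyl(s,fs)\to B$, $\delta:cyl(s,gs)\to B$ from Lemma \ref{C2}) and then by columns, so that the outer columns of the resulting diagram are by construction admissible cylinders of $A$ and $B$; this yields $[tJ]=[tI]$ and a homotopy $H'$ between $tJf$ and $tIg$, with $t\in\mrm{E}$ by Lemma \ref{C3} and two-out-of-three. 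Your sketch, as written, neither identifies this issue nor supplies the admissible cylinders needed to resolve it, so LF3 remains unproved.
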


\begin{proof}
The first statement follows trivially from the universal properties of $Ho\mc{D}$, $K\mc{D}$ and $K\mc{D}[\mrm{E}^{-1}]$.\\
Let us prove that $\mrm{E}$ has a calculus of left fractions in $K\mc{D}$. Given maps $\xymatrix@C=15pt@R=15pt{C&A\ar[l]_-{[e]} \ar[r]^-{[f]}&
B}$ in $K\mc{D}$ with $e\in\mrm{E}$, then the square
$$\xymatrix@C=15pt@R=15pt{A \ar[r]^-{f}\ar[d]_-e & B\ar[d]_-i \\
            C  \ar[r]^-j          & cyl(f,e)}$$
is such that $i\in\mrm{E}$ by lemma \ref{C2}, and it commutes up to homotopy by example \ref{ejemplHomot}.
On the other hand, let $\xymatrix@C=15pt@R=15pt{A' \ar[r]^-s & A
\ar@<0.5ex>[r]^-{f} \ar@<-0.5ex>[r]_-{g}& B}$ be morphisms in $\mc{D}$ such
that $s\in\mrm{E}$ and $f s$ is homotopic to $g s$ through the admissible cylinder $\xymatrix@C=15pt@R=15pt{A' \ar@<0.5ex>[r]^-{i}
\ar@<-0.5ex>[r]_-{j}& \widetilde{A'} \ar[r]^-{\nu}& A'}$ and the
homotopy $H:\widetilde{A'}\rightarrow B$. Summing all up, we get the commutative diagram
$$\xymatrix@C=15pt@R=15pt{
 A \ar[d]_{Id} & A' \ar[l]_-{s} \ar[r]^-{fs}\ar[d]_-{i} & B\ar[d]^-{Id}\\
 A & \widetilde{A'} \ar[r]^-{H}\ar[l]_-{s\nu} & B \\
 A \ar[u]^-{Id} & A'\ar[l]_-{s}\ar[u]^-{j} \ar[r]^-{gs} & B \ar[u]_-{Id}\ .}$$
Applying functor $cyl$ by rows we obtain
\begin{equation}\label{diagrAux0}\xymatrix@C=15pt@R=15pt{
 A \ar[d]_{Id} \ar[r]^-{p} & cyl(s,fs) \ar[d]_-{\alpha} & B\ar[d]^-{Id}\ar[l]_-{q}\\
 A \ar[r] & cyl(s\nu,H)   & B\ar[l]_-w \\
 A \ar[u]^-{Id} \ar[r]^-{k} & cyl(s,gs) \ar[u]^-{\beta} & B\ar[u]_-{Id}\ar[l]_-{l}}
 \end{equation}
where $q$, $w$ and $l$ are equivalences by lemma \ref{C2}. Now there are equivalences $\gamma:cyl(s,fs)\rightarrow B$ and
$\delta:cyl(s,gs)\rightarrow B$ such that $\gamma  p=f$, $\gamma
q=Id_B$, $\delta  k=g$ and $\delta  l=Id_B$. Then, the left diagram
below provides, applying $cyl$ by columns, the right
diagram below
\begin{equation}\label{diagrAux1}\xymatrix@C=15pt@R=15pt{
 A  \ar[r]^-{f} & B  & B  \ar[l]_-{Id}                  &&   A  \ar[r]^-{f} \ar[d]_{i_A} & B\ar[d]  & B \ar[d]^{i_B} \ar[l]_-{Id} \\
 A \ar[d]_{Id}\ar[u]^-{Id} \ar[r]^-{p} & cyl(s,fs) \ar[u]^-{\gamma}\ar[d]_-{\alpha} & B\ar[d]^-{Id}\ar[l]_-{q}\ar[u]_-{Id}    && cyl(A) \ar[r] & cyl(\gamma,\alpha)  & cyl(B)\ar[l]_-e\\
 A \ar[r] & cyl(s\nu,H)   & B\ar[l]_-w                 &&  A \ar[r]\ar[u]^{j_A} & cyl(s\nu,H)  \ar[u] & B\ar[l]_-w \ar[u]_{j_B}}
\end{equation}
where $e$ is an equivalence by the 2-out-of-3 property. Hence, composing maps in diagrams (\ref{diagrAux0}) and (\ref{diagrAux1}), and annexing $\delta$, we get the left diagram below, which gives rise again to the diagram on the right hand side
$$\xymatrix@C=15pt@R=15pt{
 cyl(A) \ar[r] & cyl(\gamma,\alpha)     & cyl(B)\ar[l]_-e            && cyl(A) \ar[d] \ar[r] & cyl(\gamma,\alpha)\ar[d]_-{\epsilon}     & cyl(B)\ar[l]_-e \ar[d]^{j'_B}  \\
 A \ar[u]^-{j_A} \ar[r]^-{k}\ar[d]_{Id} & cyl(s,gs) \ar[u]^{\beta'}\ar[d]_-{\delta} & B\ar[u]_-{j_B}\ar[d]^-{Id} \ar[l]_-{l}       && cyl(Id,j_A)  \ar[r]    & cyl(\delta,\beta')   & cyl(Id,j_B)\ar[l]_-{t}\\
 A  \ar[r]^-{g}                         & B    & B\ar[l]_-{Id}    && A  \ar[r]^-{g}\ar[u]   & B  \ar[u] & B\ar[l]_-{Id}\ar[u]_{I}  \ . }
$$
Note that $\epsilon$ and $j'_B$ are in $\mrm{E}$ since $\delta$
and $Id_B$ are. As $e\in\mrm{E}$, then $t\in\mrm{E}$. Composing
with (\ref{diagrAux1}) we deduce the commutative diagram in
$\mc{D}$
$$\xymatrix@C=15pt@R=15pt{
 A \ar[d] \ar[r]^f      & B\ar[d]_-{\epsilon'}     & B\ar[l]_-{Id} \ar[d]^{J}  \\
 cyl(Id,j_A)  \ar[r]^-{H'}    & cyl(\delta,\beta')   & cyl(Id,j_B)\ar[l]_-{t}\\
 A  \ar[r]^-{g}\ar[u]   & B  \ar[u]^{} & B\ar[l]_-{Id}\ar[u]_{I}   }$$
By construction, the left and right columns are part of admissible
cylinders of $A$ and $B$ respectively. Hence $t J$ and $t I$ are homotopic maps, which are equivalences. Write
$t'=[t J]=[t I]:B\rightarrow B'=cyl(\delta,\beta')$ in $K\mc{D}$. To
finish, note that $H'$ is a homotopy between $t J f$ and $t I g$, so $t' f=t' g$ in $K\mc{D}$.
\end{proof}

\begin{obs}[On the existence of $Ho\mc{D}$] If $\mc{D}$ is $\mc{U}$-small, for a fixed universe $\mc{U}$, the category
$Ho\mc{D}$ may not be $\mc{U}$-small. For, recall that the
category of (unbounded) complexes over any abelian category
$\mc{A}$ (with countable sums) is a simplicial descent category, where $\mrm{E}$=quasi-isomorphisms. But there are cocomplete
abelian categories whose associated
derived category $Ho\mc{D}$ does not have small hom's, as the following example due to P. Freyd. If $R$ is the free polynomial ring on the class consisting of all cardinals, then the abelian category $\mc{A}$ of small $R$-modules satisfies the property (AB 5), but $Hom_{D(\mc{A})}(\mathbb{Z},\Sigma\mathbb{Z})\equiv \mrm{Ext}^1_{\mc{A}}(\mathbb{Z},\mathbb{Z})$ is a proper class \cite{CN}.\\
On the other hand, when the class $\mrm{E}$ is a `locally
small multiplicative system' \cite{W}, then $Ho\mc{D}$ is $\mc{U}$-small. This is the situation, for instance, when the simplicial descent structure comes from a Quillen model structure, as the case of fibrant spectra.
\end{obs}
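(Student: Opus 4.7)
The observation has two complementary pieces---a negative existence claim (that $Ho\mc{D}$ may fail to be $\mc{U}$-small) and a sufficient positive condition (local smallness of $\mrm{E}$ in the sense of \cite{W})---which I would treat separately, since neither requires the machinery of descent categories beyond what Theorem \ref{MorfHoD} already provides.

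For the negative part, the plan is to exhibit a single locally $\mc{U}$-small simplicial descent category whose homotopy category fails to have small hom-sets. First, by the descent structure recalled in \cite{R1}, for any abelian category $\mc{A}$ with countable coproducts the unbounded chain complex category $\cdc$ inherits a simplicial descent structure with $\mrm{E}$ the quasi-isomorphisms, so that $Ho\cdc = D(\mc{A})$ is the usual unbounded derived category. Second, I would take $\mc{A}$ to be the category of small $R$-modules where $R$ is Freyd's free polynomial ring on the proper class of all cardinals; this $\mc{A}$ is cocomplete and satisfies (AB 5), in particular admitting countable coproducts, so the previous step applies. Third, I would identify $Hom_{D(\mc{A})}(\mathbb{Z},\Sigma\mathbb{Z})$ with $\mathrm{Ext}^1_{\mc{A}}(\mathbb{Z},\mathbb{Z})$ via the standard dimension shift, and invoke the calculation of \cite{CN} showing that this Ext group is a proper class, forcing $D(\mc{A})$ to fail $\mc{U}$-smallness.

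For the positive part, the strategy is to reduce to a standard calculus-of-fractions argument. By Theorem \ref{MorfHoD}, $\mrm{E}$ admits a calculus of left fractions in $K\mc{D}$, so every morphism in $Ho\mc{D}$ is represented by a left roof $A\rightarrow T\stackrel{\sim}{\leftarrow} B$, and $Hom_{Ho\mc{D}}(A,B)$ is a quotient of the filtered colimit of $Hom_{K\mc{D}}(A,T)$ indexed by equivalences $B\rightarrow T$. Under the local-smallness hypothesis of \cite{W}, this indexing category has a small cofinal subcategory, so the colimit---and hence its quotient---is a set. In the Quillen case (for instance, fibrant spectra), a functorial cofibrant-fibrant replacement $B\rightarrow QRB$ exhibits $Hom_{Ho\mc{D}}(A,B)$ as the set of homotopy classes $[A,QRB]$, confirming local smallness by construction.

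The main obstacle lies in the first part: I have to verify that Freyd's category of small $R$-modules really is closed under the countable coproducts demanded by \cite{R1}, and I have to identify the Hom in the \emph{unbounded} derived category with an actual $\mathrm{Ext}^1$ without implicitly assuming the existence of small K-projective or K-injective resolutions---which is itself a set-theoretic subtlety precisely in the unbounded setting, and in fact is one of the pathologies that the observation is warning about. Once those points are handled carefully, the proper-class conclusion is immediate from \cite{CN}.
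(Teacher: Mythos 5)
Your argument follows the paper's own justification essentially verbatim: the negative half is Freyd's example as recorded in \cite{CN} (unbounded complexes over a cocomplete abelian category form a simplicial descent category with $\mrm{E}$ the quasi-isomorphisms, and for the category of small modules over the free polynomial ring on all cardinals the group $Hom_{D(\mc{A})}(\mathbb{Z},\Sigma\mathbb{Z})\equiv\mrm{Ext}^1_{\mc{A}}(\mathbb{Z},\mathbb{Z})$ is a proper class), while the positive half is exactly the locally small multiplicative system criterion of \cite{W} applied through the left-fraction description of Theorem \ref{MorfHoD}. The residual worries you list are already absorbed by the citations: the identification of $Hom_{D(\mc{A})}(\mathbb{Z},\Sigma\mathbb{Z})$ with Yoneda $\mrm{Ext}^1$ goes through truncated roofs and needs no K-projective or K-injective resolutions, and cocompleteness (AB 5) of the category of small $R$-modules is part of the result quoted from \cite{CN}.
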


Next result is a direct consequence of the previous description of the morphisms in $Ho\mc{D}$, taking into account that $f\sqcup g = f'\sqcup g'$ in $Ho\mc{D}$ whenever $f$ is homotopic to $f'$ and $g$ is homotopic to $g'$ in $\mc{D}$.

\begin{cor}\label{HoDCopro} The homotopy category $Ho\mc{D}$ has (finite) coproducts, which are preserved by the canonical functor $\delta:\mc{D}\rightarrow Ho\mc{D}$.
\end{cor}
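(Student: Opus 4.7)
My plan is to combine Theorem \ref{MorfHoD}, which presents morphisms of $Ho\mc{D}$ as left roofs, with the compatibility of $\sqcup$ with homotopy pointed out in the statement. The argument splits in two: first show that $\sqcup$ descends from $\mc{D}$ to a functor on $Ho\mc{D}$, then check that it computes the coproduct.

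For the descent, I would verify the hint directly. Suppose $f \sim f' : A \to B$ via a homotopy $H:\widetilde{A}\to B$ over an admissible cylinder $(i,j,\sigma)$, and $g \sim g' : A' \to B'$ via $H':\widetilde{A}'\to B'$ over $(i',j',\sigma')$. Taking coproducts yields $\sigma \sqcup \sigma' : \widetilde{A}\sqcup\widetilde{A}'\to A\sqcup A'$, which lies in $\mathrm{E}$ by axiom $(\mathbf{S\,2})$, together with two sections $i\sqcup i'$ and $j\sqcup j'$. Both sections therefore represent $(\sigma\sqcup\sigma')^{-1}$ in $Ho\mc{D}$ and hence coincide there. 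Applying $H\sqcup H'$ gives $f\sqcup g=(H\sqcup H')(i\sqcup i')=(H\sqcup H')(j\sqcup j')=f'\sqcup g'$ in $Ho\mc{D}$, as claimed. Combined with $\mathrm{E}\sqcup\mathrm{E}\subseteq\mathrm{E}$, this descends $\sqcup$ through $\mc{D}\to K\mc{D}\to Ho\mc{D}$, and by construction the canonical functor $\delta$ respects the coproduct injections.

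For the universal property, given $\alpha:A\to C$ and $\beta:B\to C$ in $Ho\mc{D}$, I would bring them to a common denominator. Representing $\alpha=s_1^{-1}a$ and $\beta=s_2^{-1}b$ as left roofs with $s_i\in\mathrm{E}$, I form $cyl(s_1,s_2)$: by Lemma \ref{C2} the inclusions $i_{T_1}$ and $j_{T_2}$ lie in $\mathrm{E}$, and by Example \ref{ejemplHomot} the composites $j_{T_2}s_1$ and $i_{T_1}s_2$ are homotopic, yielding a common map $\sigma\in\mathrm{E}$. Then $\alpha=\sigma^{-1}(j_{T_2}a)$ and $\beta=\sigma^{-1}(i_{T_1}b)$, so $\gamma:=\sigma^{-1}\,[j_{T_2}a,\, i_{T_1}b]:A\sqcup B\to C$ restricts correctly along the canonical injections. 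For uniqueness, any other $\gamma'$ with the same restrictions can be brought, via another application of the left calculus of fractions, to a common denominator with $\gamma$; the coproduct universal property in $\mc{D}$ then forces the two numerators to agree up to homotopy on each factor, hence globally by the first paragraph, so $\gamma=\gamma'$ in $Ho\mc{D}$.

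The conceptually interesting step is the descent argument, which amounts to the single observation that $\sigma\sqcup\sigma'\in\mathrm{E}$ forces $i\sqcup i'=j\sqcup j'$ in $Ho\mc{D}$; no induction on the admissible-cylinder structure is needed. The uniqueness clause is the only fiddly point and requires iterating Theorem \ref{MorfHoD} once more to reconcile two roof representations, but involves no new ideas.
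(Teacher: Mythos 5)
Your proposal is correct and follows essentially the same route as the paper, which proves the corollary exactly by combining the roof description of morphisms from Theorem \ref{MorfHoD} with the observation that $f\sqcup g=f'\sqcup g'$ in $Ho\mc{D}$ when $f\sim f'$ and $g\sim g'$; your argument simply fills in the details (the $\sigma\sqcup\sigma'\in\mathrm{E}$ trick for the descent, and the common-denominator construction via $cyl(s_1,s_2)$, Lemma \ref{C2} and Example \ref{ejemplHomot} for the universal property) that the paper leaves to the reader. The only cosmetic omission is the empty coproduct (initial object of $Ho\mc{D}$), which follows by the same common-denominator argument.
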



\section{Cofiber sequences}\label{Cofsequenc}

In this section we
consider the cofiber sequences in $Ho\mc{D}$ induced by
the simplicial descent structure, and prove they satisfy the `axioms' of
a (not necessarily pointed) triangulated category.

\begin{defi}
The \textit{cone} functor $c:Maps(\mc{D})\rightarrow
\mc{D}$ is defined as
$c(f)=cyl(\xymatrix@H=4pt@M=4pt@C=15pt@R=15pt{\ast
&A \ar[r]^f\ar[l] & B})$. It is equipped with a natural map
$i:B\rightarrow c(f)$.\\
The \textit{suspension} (or \textit{shift}) functor
${\Sigma}:\mc{D}\rightarrow\mc{D}$ is
${\Sigma}A=cyl(\ast\leftarrow
A\rightarrow\ast)=c(A\rightarrow\ast)$. By lemma \ref{C3}, $\Sigma:\mc{D}\rightarrow\mc{D}$ sends
equivalences to equivalences, hence it induces
$\Sigma:Ho\mc{D}\rightarrow Ho\mc{D}$.
\end{defi}

\begin{defi} We consider triangles defined
through the suspension ${\Sigma}:Ho\mc{D}\rightarrow Ho\mc{D}$, that is,
sequences $A\rightarrow B \rightarrow C\rightarrow {\Sigma}A$. A
\textit{cofiber sequence} in $Ho\mc{D}$ is a triangle
isomorphic (in $Ho\mc{D}$) to a triangle
\begin{equation}\label{distingTriangl}
\xymatrix@H=4pt@M=4pt@C=15pt@R=15pt{A \ar[r]^-f & B \ar[r]^-i & c(f)
\ar[r]^-p & {\Sigma}A}
\end{equation}
for some morphism $f$ of $\mc{D}$. The map $p:c(f)\rightarrow
{\Sigma}A$ is the cone of the canonical morphism in $Maps(\mc{D})$
from $A\stackrel{f}{\rightarrow} B$ to $A\rightarrow \ast$.

A cofiber sequence can be extended to an infinite sequence of the form
$$\xymatrix@H=4pt@M=4pt@C=15pt@R=15pt{A \ar[r]^-u & B \ar[r]^-v & C
\ar[r]^-w & {\Sigma A}\ar[r]^{\Sigma u} & \Sigma B\ar[r]^{\Sigma v} & \cdots}
$$
Note that in (\ref{distingTriangl}) $p i$ factors through $\ast$ in $\mc{D}$ by definition. The same holds for $i f$, this time up to homotopy. Hence any
composition of consecutive morphisms in a cofiber sequence is
trivial in $Ho\mc{D}$, that is, it factors through $\ast$.
\end{defi}

\begin{ejs} Classical examples of such cofiber sequences are those of topological spaces, simplicial sets and chain complexes. All of them come from their respective simplicial structures \cite{R1}.\\
\indent \textbf{{Simplicial sets:}} The simple functor is $\mrm{D}:\simp\simp Set\rightarrow \simp Set$, the diagonal functor of a bisimplicial set. Then the obtained cylinder functor is the classical one, that is $cyl(A)=A\times\Dl[1]$, as well as the induced cofiber sequences. A detailed study of them can be found, for instance, in \cite{GZ}.\\
\indent {\textbf{Topological Spaces:}} Given a continuous map $f$ of topological spaces, the cone $c(f)$ of $f$ induced by the descent structure  is the `fat geometric realization' \cite[Appendix A]{S} of $C(f)$, the simplicial cone of $f$. Since all degeneracy maps of $C(f)$ are closed cofibrations (they are coproducts of identities), then $c(f)$ is naturally homotopic to the classical cone associated with $f$.\\
\indent \textbf{Chain complexes}
If $\mc{A}$ is an additive or abelian category with countable coproducts, then a simple functor $\mbf{s}:\simp\mc{D}\rightarrow\mc{D}$ can be defined through the total complex of the (eventually normalized) double complex given by a simplicial chain complex.\\
The cone and cylinder objects induced by the descent structure are the usual one. Therefore the resulting cofiber sequences are the usual `distinguished triangles' in the associated homotopy category if $\mrm{E}$=homotopy equivalences, or in the derived category of $\mc{A}$ if $\mc{A}$ is abelian and $\mrm{E}$=quasi-isomorphisms.\\
If we restrict our attention to uniformly bounded chain complexes, for instance positive chain complexes, then the simple $\mbf{s}$ can always be defined, with no assumption on countable coproducts. On the other hand, if one is only interested in the properties of cofiber sequences of unbounded chain complexes, this assumption could be dropped as well. The idea is to consider a notion of `finite simplicial descent category'. It would involve just `finite' simplicial objects (which are degenerate in dimensions greater than a fixed integer), but we do not go into this task here. The reason why this restriction would work is that cofiber sequences are defined through the action $-\boxtimes K$, where $K$ is degenerate in dimensions greater than 2.
\end{ejs}


We return now to a general simplicial descent category $\mc{D}$. A relevant property of the cofiber sequences in $Ho\mc{D}$ is that they include the image under the simple functor of the `classical' cofiber sequences in $\simp\mc{D}$.

\begin{lema}\label{CofSeqDlD} A map $F:A\rightarrow B$ in $\simp\mc{D}$ gives rise to the sequence
$A\stackrel{F}{\rightarrow} B\stackrel{I}{\rightarrow} C(F) \stackrel{P}{\rightarrow}\Lambda A$,
where $C(F)$ is the $($simplicial$)$ cone of $F$ and $\Lambda A=C(A\rightarrow \ast)$. Then $\mbf{s}A\stackrel{\mbf{s}F}{\rightarrow} \mbf{s}B\stackrel{\mbf{s}I}{\rightarrow} \mbf{s}C(F) \stackrel{\mbf{s}P}{\rightarrow} \mbf{s}\Lambda A\equiv\Sigma \mbf{s}A$ is a cofiber sequence in $Ho\mc{D}$.
\end{lema}

\begin{proof} Lemma \ref{sCyls} provides a natural isomorphism of $Ho\mc{D}$ between $\mbf{s}CF=\mbf{s}Cyl(\ast\leftarrow A\stackrel{F}{\rightarrow} B)$ and $L=\mbf{s}Cyl(\mrm{R}\ast\leftarrow \mbf{s}A\stackrel{\mbf{s}F}{\rightarrow}\mbf{s}B)$. This isomorphism is induced by the transformation $\mu$ relating the iterated simple of certain bisimplicial object with the simple of its diagonal. On the other hand, the equivalence $\lambda_\ast:\mrm{R}\ast\rightarrow \ast$ produces an equivalence $L\rightarrow \mbf{s}C(\mbf{s}F\times\Dl)=c(\mbf{s}F)$. Putting all together we obtain a natural isomorphism $\omega:\mbf{s}CF\rightarrow c(\mbf{s}F)$ of $Ho\mc{D}$ with $\omega \mbf{s}I=i$ (here we are using the compatibility conditions (\ref{compatibLambdaMuEquac}) between $\lambda$ and $\mu$). We can construct analogously $\omega':\mbf{s}\Lambda A\rightarrow \Sigma \mbf{s}A$ with $\omega' \mbf{s}P= p\omega$. Hence the simple of our given sequence is isomorphic to the cofiber sequence defined by $\mbf{s}F$.
\end{proof}

The rest of the section is devoted to prove the following

\begin{thm}\label{struct triangulada} Let $\mc{D}$ be a simplicial descent category. Then, the cofiber sequences $($\ref{distingTriangl}$)$ of $Ho\mc{D}$ satisfy axioms TR1, TR3 and TR4 $($octahedron$)$ of triangulated categories, together with a
`non-stable' version of \emph{TR 2} given in proposition \ref{TR2}.
In addition, a descent functor $\psi:\mc{D}\rightarrow \mc{D}'$ preserves cofiber sequences.
\end{thm}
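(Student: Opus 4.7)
The overall strategy is to reduce each axiom to a statement at the simplicial level, where the classical cone and cylinder constructions behave well because $\simp\mc{D}$ inherits finite coproducts and a final object from $\mc{D}$, and then to transport the statement to $Ho\mc{D}$ via Lemma \ref{CofSeqDlD}, together with Lemma \ref{C3} and axioms $\mathbf{(S\;6)}$, $\mathbf{(S\;8)}$ to control equivalences under $\mbf{s}$. The verification of TR1 splits into two parts: the extension of any $f$ to a cofiber sequence is definitional, and for the identity morphism, Lemma \ref{C2}(a) applied to the diagram $\ast \leftarrow A \xrightarrow{Id} A$ shows that $\ast \to c(Id_A)$ lies in $\mrm{E}$, so $c(Id_A)\cong\ast$ in $Ho\mc{D}$ and $A\xrightarrow{Id}A\to\ast\to\Sigma A$ is isomorphic to the standard cofiber sequence of $Id_A$.

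For TR3, given a homotopy-commutative square between two morphisms $f,f'$ with vertical maps $a,b$, I would first use Theorem \ref{MorfHoD} to realise $a,b$ by ordinary morphisms of $\mc{D}$ with $bf$ homotopic to $f'a$, then apply the cylinder trick (replacing the target cone by a suitable $cyl$) to convert this into a strictly commutative square; the functoriality of $cyl$ together with Lemma \ref{C3} then produces the required morphism of cones, and compatibility with the map to $\Sigma A'$ follows from naturality of $\Sigma$. For TR4 I would work in $\simp\mc{D}$: given $F:A\to B$ and $G:B\to C$, the universal properties of the pushouts defining $C(F)$, $C(G)$ and $C(GF)$ yield natural morphisms $C(F)\to C(GF)\to C(G)$, together with a simplicial equivalence $C(C(F)\to C(GF))\to C(G)$ obtained by collapsing a contractible copy of the cone on $B$. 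Applying $\mbf{s}$ and Lemma \ref{CofSeqDlD} three times produces the octahedron in $Ho\mc{D}$.

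The non-stable TR2 (proposition \ref{TR2}) is the most delicate step and constitutes the main obstacle. At the simplicial level, starting from $F:A\to B$ and the cone inclusion $I_F:B\to C(F)$, the iterated cone $C(I_F)$ contains a contractible copy of the cone on $B$; collapsing it produces a simplicial equivalence $\kappa:C(I_F)\to \Lambda A$, which after $\mbf{s}$ is an equivalence by axiom $\mathbf{(S\;6)}$. Under $\kappa$, the canonical projection $C(I_F)\to \Lambda B$ identifies with $\Lambda F$ composed with a coordinate-reversal map $m_B:\Lambda B\to \Lambda B$; the necessity of this sign reflects the fact that nesting two $\Dl[1]$-directions reverses orientation on one of them, and handling it cleanly at the level of the simple functor requires the compatibility provided by axiom $\mathbf{(S\;8)}$ together with the diagram chase of Lemma \ref{CilindroIterado} comparing the two iterated cylinders. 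Finally, preservation of cofiber sequences by a descent functor $\psi$ is direct from the definition: the natural isomorphism $\Theta:\psi\mbf{s}\to \mbf{s}'\psi$, together with the preservation of finite coproducts and of the final object up to equivalence, yields $\psi c(f)\simeq c(\psi f)$ and $\psi\Sigma A\simeq\Sigma\psi A$ naturally in $Ho\mc{D}'$, so the image of a cofiber sequence is again a cofiber sequence.
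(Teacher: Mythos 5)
Your overall architecture coincides with the paper's: reduce everything to the simplicial level, transport via Lemma \ref{CofSeqDlD}, treat arbitrary morphisms of $Ho\mc{D}$ through the fraction calculus of Theorem \ref{MorfHoD} (this is also what TR1(iii) needs -- it is not ``definitional'' for a map that is only a roof), and prove TR2/TR4 by comparing iterated cones with $\Lambda A$ resp.\ $C(v)$. Your TR1, TR3 and descent-functor arguments are essentially the paper's. The gaps are in the justifications of the two delicate steps. First, the collapse maps you invoke are not degree-wise equivalences: the map $C(I_F)\rightarrow \Lambda A$ that crushes the copy of $CB$ changes every level (it sends whole coproduct summands to the point), so axiom $\mathbf{(S\;6)}$ says nothing about it. What the paper actually does is rewrite $C(I_F)$, via the reordering isomorphism of Lemma \ref{CilindroIterado}, as the cone of a map $A\rightarrow CB$, and then apply Lemma \ref{C3} using that $\mbf{s}CB=c(Id_B)$ is acyclic by $\mathbf{(S\;7)}$; the analogous point in TR4 uses Lemma \ref{C2}. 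Likewise $\mathbf{(S\;8)}$ plays no role in the sign bookkeeping: the identification of the third map of the rotated triangle with $m_B\Sigma u$ goes through the object $\Lambda^1_2B=C(I_B)$ of Definition \ref{signoMenos}, via a map $\overline{u}:C(I)\rightarrow\Lambda^1_2B$ satisfying $P_1\overline{u}=P_B$ and $P_2\overline{u}=\Lambda u\,\phi$, so that $m_B=\mbf{s}P_2^{-1}\mbf{s}P_1$ enters; citing the inverse-order axiom does not produce this.

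Second, for TR4 you assert a ``simplicial equivalence'' $C(C(F)\rightarrow C(GF))\rightarrow C(G)$ obtained by collapsing a contractible copy of the cone on $B$. The contractible piece is in fact built from cones on $A$ (after reordering, $C(\alpha')\cong C(\overline{\alpha})$ with $\overline{\alpha}:CA\rightarrow C(v)$), and, more importantly, in $\simp\mc{D}$ one only has finite coproducts and the specific $Cyl$-pushouts, so a collapse cannot be performed as a quotient: the comparison map must be constructed explicitly. The paper does this with the combinatorial retraction $r:\Dl[1]\times\Dl[1]\rightarrow\Dl[1]$, which induces $\tau:CCA\rightarrow CA$ with $\tau I_{CA}=\tau\varrho=Id$ and hence $\varphi:C(\overline{\alpha})\rightarrow C(v)$. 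This $\varphi$ is not only how one gets the equivalence; it is also what verifies the commutativity (in $Ho\mc{D}$) of the square comparing $c(vu)\rightarrow c(v)$ with the canonical map $c(vu)\rightarrow \mbf{s}C(\alpha')$, i.e.\ what shows your comparison is a morphism of triangles rather than merely an abstract equivalence of third terms. Your sketch produces the objects and an equivalence but never addresses these compatibilities, either in TR4 or in TR2 (where the same issue is exactly the point at which $m_B$ appears), and those compatibility checks are the real content of both axioms.
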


\begin{proof} Let us see the last statement. Let $\psi:\mc{D}\rightarrow \mc{D}'$ be a descent functor. Since $\psi$ commutes with coproducts, the natural map $Cyl(\psi f, \psi g)\rightarrow \psi Cyl(f,g)$ is a degree-wise equivalence. Applying $\mbf{s}'$ and composing with $\Theta$ yields a natural isomorphism $\theta:\mbf{s}'Cyl(\psi f,\psi g)\rightarrow \psi\mbf{s}Cyl(f,g)$ of $Ho\mc{D}$. As $\psi$ preserves final objects up to equivalence, $\psi(\Sigma A)\simeq \Sigma (\psi A)$ and $\psi(c(f))\simeq c(\psi f)$, so $\psi$ preserves cofiber sequences. The rest of the theorem will be proven below.
\end{proof}

\begin{prop}[\textbf{TR 1}]\mbox{}\\
i) There is a cofiber sequence
$A\stackrel{Id}{\rightarrow}A\rightarrow \ast{\rightarrow} \Sigma A$.\\
ii) Every triangle isomorphic to a cofiber sequence is so.\\
iii) Every map $f$ of $Ho\mc{D}$ is part of a cofiber sequence
$\xymatrix@C=13pt{A\ar[r]^f
&B\ar[r]&C\ar[r]&\Sigma A}$.
\end{prop}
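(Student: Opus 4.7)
Part (ii) is immediate from the definition: a cofiber sequence is, by construction, any triangle isomorphic (in $Ho\mc{D}$) to one of the form $A\xrightarrow{f} B\xrightarrow{i} c(f)\xrightarrow{p}\Sigma A$, so being isomorphic to a cofiber sequence is tautologically the same as being a cofiber sequence.

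For part (i), my plan is to observe that the cone of $Id_A$ is acyclic, and transport the tautological cofiber sequence of $Id_A$ along the resulting equivalence. More precisely, $c(Id_A)=\mbf{s}C(Id_A\times\Dl)$, and axiom $\mathbf{(S\;7)}$ applied to $Id_A\in\mrm{E}$ shows that $c(Id_A)$ is acyclic. Hence the unique map $\pi:c(Id_A)\to\ast$ is an isomorphism in $Ho\mc{D}$. Setting $q=p\circ\pi^{-1}:\ast\to\Sigma A$ gives a commutative diagram
$$\xymatrix@C=20pt@R=14pt{A \ar[r]^{Id}\ar[d]_{Id} & A \ar[r]^-{i}\ar[d]_{Id} & c(Id_A) \ar[r]^-{p}\ar[d]_{\pi}^{\sim} & \Sigma A \ar[d]^{Id}\\ A \ar[r]^{Id} & A \ar[r] & \ast \ar[r]^-{q} & \Sigma A}$$
whose rows are triangles and whose vertical maps are isomorphisms in $Ho\mc{D}$ (the middle squares commute because $\ast$ is terminal, and the right square commutes by definition of $q$). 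Thus the bottom row is isomorphic to a cofiber sequence, and by (ii) it is one.

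For part (iii), I would use Theorem \ref{MorfHoD}: any morphism $f:A\to B$ of $Ho\mc{D}$ admits a left-fraction representation $A\xrightarrow{g}T\xleftarrow{e}B$ with $g$ a morphism of $\mc{D}$ and $e\in\mrm{E}$, so that $f=e^{-1}\circ g$ in $Ho\mc{D}$. The cone construction in $\mc{D}$ produces the cofiber sequence $A\xrightarrow{g}T\xrightarrow{i_T}c(g)\xrightarrow{p}\Sigma A$. The relations $e\circ f=g$ and $(i_T\circ e)\circ e^{-1}=i_T$ in $Ho\mc{D}$ yield the commutative diagram
$$\xymatrix@C=20pt@R=14pt{A \ar[r]^{g}\ar[d]_{Id} & T \ar[r]^-{i_T}\ar[d]_{e^{-1}}^{\sim} & c(g) \ar[r]^-{p}\ar[d]_{Id} & \Sigma A \ar[d]^{Id}\\ A \ar[r]^{f} & B \ar[r]^-{i_T\circ e} & c(g) \ar[r]^-{p} & \Sigma A}$$
with vertical isomorphisms in $Ho\mc{D}$. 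The bottom row is then isomorphic to the cofiber sequence of $g$, and by (ii) it is a cofiber sequence extending $f$.

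The main thing requiring a bit of care is the compatibility checks in parts (i) and (iii) — in particular, in (iii), using the identity $g=ef$ coming from the roof representation to verify the left-hand square commutes in $Ho\mc{D}$. Beyond that, no real obstacle is present: everything reduces to producing an isomorphism of triangles with a distinguished row obtained by the cone construction of a morphism in $\mc{D}$, and then invoking (ii).
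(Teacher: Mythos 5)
Your proposal is correct and takes essentially the same route as the paper: (ii) is definitional, and (iii) transports the standard cofiber sequence of the numerator of a left-fraction representation (Theorem \ref{MorfHoD}) of $f$ along the equivalence $e$. The only inessential difference is in (i), where you invoke axiom $\mathbf{(S\;7)}$ to see that $c(Id_A)$ is acyclic, while the paper cites Lemma \ref{C2}; both give the same identification of $c(Id_A)$ with $\ast$ in $Ho\mc{D}$.
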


\begin{proof}
\textit{i}) follows from lemma \ref{C2}, while \textit{ii}) holds by
definition. Let us prove \textit{iii}). By theorem \ref{MorfHoD},
$f:A\rightarrow B$ is represented by a zig-zag
$\xymatrix@M=4pt@H=4pt@C=15pt@R=15pt{A \ar[r]^{\overline{f}} &  T&
\ar[l]_w  B }$.  Then
$\xymatrix@M=4pt@H=4pt@C=15pt{A\ar[r]^{f} &  B \ar[r]^{i w^{-1}} &
c(\overline{f})\ar[r]^-{p} & \Sigma A}$ is clearly a cofiber sequence.
\end{proof}
\begin{prop}[\textbf{TR 3}]\label{TR3}
Consider the diagram of solid arrows
$$\xymatrix@M=4pt@H=4pt@R=15pt@C=15pt{A\ar[r]^f \ar[d]_{\alpha} & B \ar[r]\ar[d]_{\beta}    & C \ar[r]\ar@{.>}[d]_{\gamma} & \Sigma A\ar[d]_{\Sigma \alpha}\\
                         A'\ar[r]^{g}                 & B'\ar[r]   & C' \ar[r]& \Sigma A'.}$$
where the rows are cofiber sequences and the left square
commutes in $Ho\mc{D}$. Then $(\alpha,\beta)$ extends to a
morphism of triangles $(\alpha,\beta,\gamma)$.
\end{prop}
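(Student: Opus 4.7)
By \textbf{TR 1} ii) I may assume both rows are the standard cofiber sequences $A\xrightarrow{f} B\xrightarrow{i} c(f)\xrightarrow{p}\Sigma A$ and $A'\xrightarrow{g} B'\xrightarrow{i'} c(g)\xrightarrow{p'}\Sigma A'$ coming from honest maps $f,g$ in $\mc{D}$. My strategy is to use the left fractions available in $K\mc{D}$ (Theorem \ref{MorfHoD}) to replace $\alpha$ and $\beta$ by representatives that sit in a square which commutes up to homotopy in $\mc{D}$, and then obtain $\gamma$ by functoriality of $cyl$ (Lemma \ref{C3}).

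Concretely, represent $\alpha$ by a roof $A\xrightarrow{\bar\alpha}\tilde A'\xleftarrow{s}A'$ with $s\in\mrm{E}$. Apply left fractions to $\tilde A'\xleftarrow{s}A'\xrightarrow{g}B'$ to produce $\tilde A'\xrightarrow{\tilde g}\tilde B'\xleftarrow{u}B'$ with $u\in\mrm{E}$ and $[u\,g]=[\tilde g\,s]$ in $K\mc{D}$. The identity $\beta f=g\alpha$ in $Ho\mc{D}$ then says that the roof $A\xrightarrow{\tilde g\bar\alpha}\tilde B'\xleftarrow{u}B'$ agrees with $\beta f$ in $Ho\mc{D}$; a further application of left fractions (and possibly replacing $\tilde B'$ by a new target connected to it by an equivalence) yields a map $\tilde\beta:B\to\tilde B'$ with $[\tilde\beta\,f]=[\tilde g\,\bar\alpha]$ in $K\mc{D}$ and with $u$ representing $\beta=u^{-1}\tilde\beta$ in $Ho\mc{D}$. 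Thus I obtain a square
$$\xymatrix@M=4pt@H=4pt@R=15pt@C=15pt{A\ar[r]^f\ar[d]_{\bar\alpha} & B\ar[d]^{\tilde\beta}\\ \tilde A'\ar[r]^{\tilde g} & \tilde B'}$$
commuting in $K\mc{D}$.

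Now the functoriality of the $Cyl$ construction in $\simp\mc{D}$, together with the fact that homotopies in $\mc{D}$ are built from admissible cylinders, allows me to apply $cyl$ to this (up-to-homotopy) commutative square of pairs $(f)\to(\tilde g)$ in $Maps(\mc{D})$. This produces $\tilde\gamma:c(f)\to c(\tilde g)$ in $K\mc{D}$ making the middle square commute. On the other hand, the equivalences $s:A'\to\tilde A'$ and $u:B'\to\tilde B'$ fit into a morphism of pairs $(g)\to(\tilde g)$, so by Lemma \ref{C3} they induce an equivalence $c(g)\xrightarrow{\sim}c(\tilde g)$; inverting it in $Ho\mc{D}$ and composing with $\tilde\gamma$ gives the desired $\gamma:c(f)\to c(g)$.

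The main obstacle is the commutativity of the rightmost square involving $\Sigma\alpha$. It will follow from the fact that $p$ is obtained as the cone of the canonical morphism $(f)\to(A\to\ast)$ in $Maps(\mc{D})$; applying $cyl$ to the evident compatible squares of such pairs (and using that $\psi(\ast)\to\ast$ is an equivalence when interpreting the target of $\Sigma A$) shows that $\tilde\gamma$ is compatible with $\Sigma\bar\alpha$, and then the equivalences $s,u$ translate this back to the original sequences. The verification essentially amounts to repeatedly invoking Lemma \ref{C3} and Lemma \ref{CofSeqDlD} to control how the simple functor and $cyl$ interact with the boundary maps $p$ and $p'$.
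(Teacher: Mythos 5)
Your proposal follows essentially the same route as the paper: reduce, via the calculus of left fractions in $K\mc{D}$ (Theorem \ref{MorfHoD}), to a square of honest maps of $\mc{D}$ commuting up to homotopy, and then produce $\gamma$ from functoriality of the cone together with Lemma \ref{C3} to invert the equivalences induced on cones. The one step you leave implicit -- how the homotopy $H$ defined on an admissible cylinder actually yields the map of cones -- is exactly what the paper makes precise by applying the cone column-wise to the zigzag $A\rightarrow\overline{A}\leftarrow A\rightarrow A'$ over $B\rightarrow B'\leftarrow B'\rightarrow B'$, which in particular produces $\gamma$ only as a morphism of $Ho\mc{D}$ (not of $K\mc{D}$ as you state), a harmless inaccuracy since only a morphism in $Ho\mc{D}$ is required.
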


\begin{proof}
We can assume that $f$ and $g$ are morphisms of $\mc{D}$ and
$C=c(f)$, $C'=c(g)$. If $\alpha$ and $\beta$ are morphisms in
$\mc{D}$ and $\beta f = g\alpha$ in $\mc{D}$, then we
set $\gamma={c}(\alpha,\beta)$. Note that if $\alpha,\beta$ are
equivalences then so is $\gamma$ by lemma \ref{C3}.\\
Suppose now that $\alpha$, $\beta$ are morphisms of $\mc{D}$ and $\beta
f=g\alpha$ up to a homotopy $H:\overline{A}\rightarrow B'$. We can
obtain $\gamma$ from the previous case, applying the cone functor
by columns to
$$\xymatrix@M=4pt@H=4pt@C=15pt@R=15pt{
 A\ar[d]_{f}\ar[r]^{\sim} & \overline{A}  \ar[d]_{H}  & A\ar[l]_{\sim}\ar[d]_{g\alpha} \ar[r]^-{\alpha} &  A'\ar[d]_{g}\\
 B \ar[r]^{\beta}  &  B'                    & B'\ar[l]_{Id} \ar[r]^{Id}                &  B'.}$$
The general case is deduced from this case as usual, using that
$\mrm{E}$ has calculus of left fractions in $K\mc{D}$.
\end{proof}

Now we will study the octahedron axiom {TR 4}.
Two composable morphisms
$A\stackrel{u}{\rightarrow}B\stackrel{v}{\rightarrow}C$ in
$\mc{D}$ give rise in a natural way to the triangle
\begin{equation}\label{TriangOctaedro}c(u)\stackrel{\alpha}{\rightarrow} c(v u)\stackrel{\beta}{\rightarrow} c(v)\stackrel{\gamma}{\rightarrow} \Sigma c(u)\end{equation}
where $\alpha$ and $\beta$ come from the squares
$$\xymatrix@M=4pt@H=4pt@R=15pt@C=15pt{ A \ar[d]_{Id}\ar[r]^u          & B \ar[d]^{v}     && A\ar[r]^{v u} \ar[d]_{u} & C\ar[d]^{Id} \\
                         A \ar[r]^{v u}                          & C\ar@{}[rru]|{;} && B\ar[r]^{v}     & C           .}$$
On the other hand, $c(v)\stackrel{\gamma}{\rightarrow}\Sigma c(u)$ is the
composition $c(v)\stackrel{p}{\rightarrow} \Sigma B
\stackrel{\Sigma p}{\rightarrow} \Sigma c(u)$.

\begin{prop}[\textbf{TR 4}]
The sequence $(\ref{TriangOctaedro})$ is a cofiber sequence.
Moreover, Verdier's octahedron axiom holds in $Ho\mc{D}$.
\end{prop}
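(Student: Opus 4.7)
My strategy is to identify $c(v)$ with the cone of $\alpha:c(u)\to c(vu)$ in $Ho\mc{D}$; once this identification is made, (\ref{TriangOctaedro}) becomes a cofiber sequence by definition. The first move is to promote $\alpha$ to a simplicial map: viewing $A,B,C$ as constant simplicial objects, the defining square of $\alpha$ yields by functoriality of $C$ a natural simplicial map $\alpha^\simp:C(u\times\Dl)\to C(vu\times\Dl)$ in $\simp\mc{D}$, and by Lemma \ref{sCyls} the natural identifications $c(u)\simeq\mbf{s}C(u\times\Dl)$ and $c(vu)\simeq\mbf{s}C(vu\times\Dl)$ carry $\mbf{s}\alpha^\simp$ to $\alpha$. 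Applying Lemma \ref{CofSeqDlD} to $\alpha^\simp$ then produces a cofiber sequence
\[
c(u)\xrightarrow{\alpha} c(vu)\to \mbf{s}C(\alpha^\simp)\to \Sigma c(u)
\]
in $Ho\mc{D}$.

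The main technical step is to exhibit a natural equivalence $\mbf{s}C(\alpha^\simp)\simeq c(v)$ that carries the second map above to $\beta$ and the boundary to $\gamma$. The guiding intuition is that cofibres behave like iterated pushouts: heuristically $c(\alpha)=\ast\sqcup_{c(u)}c(vu)=\ast\sqcup_{(\ast\sqcup_A B)}(\ast\sqcup_A C)$, and pasting of pushouts gives $\ast\sqcup_B C=c(v)$. To make this precise I would regard $C(\alpha^\simp)$ as the diagonal of a bisimplicial object $W\in\simp\simp\mc{D}$ whose two simplicial directions encode the $\alpha^\simp$-cone coordinate and the internal $u$- and $vu$-cone coordinates. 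The Eilenberg--Zilber axiom $(\mathbf{S\; 4})$ gives $\mbf{s}\mrm{D}W\simeq \mbf{s}\mbf{s}W$, and computing the iterated simple in the reverse order simplifies, via Lemma \ref{CilindroIterado} (to reassociate the nested cylinders) and Lemma \ref{C3} (to absorb the acyclic fibres corresponding to the collapsed copies of $A$ and $\ast$), to $c(v\times\Dl)$, i.e.\ to $c(v)$. Naturality of every intervening construction ensures that the structural map $c(vu)\to\mbf{s}C(\alpha^\simp)\simeq c(v)$ agrees with $\beta$, and that the boundary agrees with $\gamma=(\Sigma i)\circ p$.

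For Verdier's octahedron axiom, once the cofiber sequence (\ref{TriangOctaedro}) is established, the commutativity of the four triangles/squares of the octahedron is automatic from the functorial construction of $\alpha,\beta,\gamma$: the relevant squares commute up to homotopies of the form described in Example \ref{ejemplHomot}, and these homotopies become equalities in $Ho\mc{D}$ by Lemma \ref{relHomotopia}. The principal obstacle I foresee is the explicit construction of the bisimplicial witness $W$ together with the verification that both of its simples compute the expected objects carrying the expected structural maps. This is essentially a careful bookkeeping of nested cylinder constructions and repeated application of $(\mathbf{S\; 4})$ and $(\mathbf{S\; 6})$; attention to naturality is critical so that the identifications at each end of the equivalence match, but no new conceptual ingredient beyond the lemmas of Section~1 is required.
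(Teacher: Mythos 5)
Your overall skeleton is the same as the paper's: promote $\alpha$ to the simplicial map $\alpha':C(u\times\Dl)\rightarrow C(vu\times\Dl)$, invoke Lemma \ref{CofSeqDlD} to get the cofiber sequence $c(u)\rightarrow c(vu)\rightarrow\mbf{s}C(\alpha')\rightarrow\Sigma c(u)$, and then identify $\mbf{s}C(\alpha')$ with $c(v)$. However, there are two real problems with the way you propose to carry out the identification. First, the Eilenberg--Zilber step does not do what you want: viewing $C(\alpha')$ as the diagonal of the evident bisimplicial object and computing the iterated simple in the other order only produces $c(\alpha)$, the cone in $\mc{D}$ of $\alpha:c(u)\rightarrow c(vu)$ --- this is exactly the content of Lemma \ref{sCyls} --- and the passage from $c(\alpha)$ to $c(v)$ (your heuristic pasting of pushouts) is precisely the point at issue; $(\mathbf{S\,4})$ and bookkeeping do not supply it. The paper instead constructs an explicit comparison $\psi:C(v)\rightarrow C(\alpha')$, uses Lemma \ref{CilindroIterado} to reorder $C(\alpha')\cong C(\overline{\alpha})$ with $\overline{\alpha}:CA\rightarrow C(v)$, and then Lemma \ref{C2} (collapsing the acyclic $CA$) plus 2-out-of-3 to see $\mbf{s}\psi\in\mrm{E}$; your intuition about absorbing the acyclic copies of $A$ is right, but it must be implemented this way, not via $(\mathbf{S\,4})$.

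Second, and more seriously, the claim that ``naturality of every intervening construction ensures'' that the resulting equivalence carries the structural map $c(vu)\rightarrow\mbf{s}C(\alpha')$ to $\beta$ is a genuine gap. The square comparing $\psi\beta'$ (the route through $C(v)$) with the canonical inclusion $I:C(vu)\rightarrow C(\alpha')$ does \emph{not} commute in $\simp\mc{D}$, only up to homotopy, and making it commute in $Ho\mc{D}$ is the technical heart of the paper's proof: one needs an auxiliary map $\varphi:C(\overline{\alpha})\rightarrow C(v)$ with $\mbf{s}\varphi\in\mrm{E}$ satisfying $\varphi\Theta\psi\beta'=\varphi\Theta I$ on the nose, and $\varphi$ is built from a retraction $\tau:CCA\rightarrow CA$ coming from the combinatorial retraction $r:\Dl[1]\times\Dl[1]\rightarrow\Dl[1]$ with $r(d^0\times Id)=r(Id\times d^0)=Id$ and the remaining faces factoring through $d^1$. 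This is a new ingredient beyond the lemmas of Section~1, contrary to your closing assertion. Finally, you should also record the reduction of the general case (where $u,v$ are zig-zags in $Ho\mc{D}$) to the strict case via the calculus of left fractions and Proposition \ref{TR3}; your appeal to Example \ref{ejemplHomot} and Lemma \ref{relHomotopia} does not by itself cover morphisms of $Ho\mc{D}$ that are not images of morphisms of $\mc{D}$.
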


\begin{proof} Note that $(\ref{TriangOctaedro})$ is the image under the simple functor of the sequence $C(u)\stackrel{\alpha'}{\rightarrow} C(vu) \stackrel{\beta'}{\rightarrow} C(v) \stackrel{\gamma'}{\rightarrow} \Lambda C(u)$, where $\mbf{s}\Lambda C(u)$ is identified with $\mbf{s}\Lambda\mbf{s}C(u)=\Sigma c(u)$ (by lemma \ref{sCyls}). By lemma \ref{CofSeqDlD}, it is enough to see that $(\ref{TriangOctaedro})$ is isomorphic to the image under the simple functor of $C(u)\stackrel{\alpha'}{\rightarrow} C(vu) \stackrel{I}{\rightarrow} C(\alpha') \stackrel{P}{\rightarrow} \Lambda C(u)$.\\
Define $\psi:C(v)\rightarrow C(\alpha')$ by sending $B\rightarrow C(u)$ and $C\rightarrow C(vu)$. Then $\psi$ fits into the diagram of simplicial objects
$$\xymatrix@M=4pt@H=4pt@R=12pt@C=12pt{
 C(u)\ar[r]^-{\alpha'}\ar[d]_{Id} &  C(v u)\ar[d]_{Id} \ar[r]^-{\beta'}         & C(v) \ar[r]^-{\gamma'} \ar[d]_{\psi}& \Lambda C(u)\ar[d]^{Id}\\
 C(u)\ar[r]^-{\alpha'} &  C(v u) \ar[r]^{I}\ar@{}[ru]|{\mathrm{(1)}} & C(\alpha') \ar[r]     & \Lambda C(u)\ar@{}[lu]|{\mathrm{(2)}}}
$$
and square $\mathrm{(2)}$ is commutative by definition. It remains to see that $\mbf{s}\psi$ is an equivalence and that the simple of square $\mathrm{(1)}$ commutes in $Ho\mc{D}$.\\
Let us see first that $\mbf{s}\psi$ is an equivalence. Denote by $\overline{\alpha}:CA\rightarrow C(v)$ the map induced by $u$ and $v$. By lemma \ref{CilindroIterado}, reordering the coproduct $C(\alpha')_n$ yields an isomorphism $\Theta:C(\alpha')\rightarrow C(\overline{\alpha})$ such that $\Theta\psi$ is the canonical map $I_{C(v)}:C(v)\rightarrow C(\overline{\alpha})$. But $\mbf{s}(C(v)\rightarrow C(\overline{\alpha}))$ is equivalent to the map $i_{c(v)}:c(v)\rightarrow c(c(A)\rightarrow c(v))$, which is an equivalence by lemma \ref{C2}. Therefore $\mbf{s}\psi\in\mrm{E}$ by the 2-out-of-3 property.\\
Let us check now that $\mbf{s}\mathrm{(1)}$ commutes in $Ho\mc{D}$. We will see that there exists $\varphi:C(\overline{\alpha})\rightarrow C(v)$ with $\mbf{s}\varphi\in\mrm{E}$ and
$\varphi\Theta\psi\beta'=\varphi\Theta I$ in $\simp\mc{D}$.
Note that the square
$$\xymatrix@M=4pt@H=4pt@R=12pt@C=12pt{
 CA\ar[r]^-{I_{CA}}\ar[d]_-{\overline{\alpha}} & CCA\ar[d] \\
 C(v) \ar[r]^-{I_{C(v)}} & C(\overline{\alpha})}$$
is a pushout square. We have the maps $I_{CA},\varrho:CA\rightarrow CCA$ where $I_{CA}$ is the canonical map and $\varrho$ is induced by $(I_A:A\rightarrow CA,I_A:A\rightarrow CA)$. We state that there exists $\tau:CCA\rightarrow CA$ with $\tau I_{CA}=\tau \varrho=Id_{CA}$ in $\simp\mc{D}$.\\
To this end, consider the retraction $r:\Dl[1]\times\Dl[1]\rightarrow\Dl[1]$ such that $r (d^0\times Id)=r(Id\times d^0)=Id$ and the maps $r(d^1\times Id)$, $r(Id\times d^1)$ factor through $d^1:\Dl[0]\rightarrow \Dl[1]$. For, if $s,t:[n]\rightarrow [1]$ and $i\in [n]$, then $[r(s,t)](i)=1$ if $s(i)=t(i)=1$ and $[r(s,t)](i)=0$ otherwise.\\
On the other hand, the cone $CA$ of an object $A$ is defined as the pushout of the map $d^0\sqcup d^1:A\sqcup A\rightarrow A\boxtimes \Dl[1]$ along $A\sqcup A\rightarrow A\sqcup \ast$. Hence $CCA$ is defined by a double pushout involving $A\boxtimes(\Dl[1]\times\Dl[1])$, and the desired map $\tau$ is obtain from $Id_A\boxtimes r:A\boxtimes(\Dl[1]\times\Dl[1])\rightarrow A\boxtimes\Dl[1]$.\\
Now, the maps $\overline{\alpha}\tau:CCA\rightarrow C(v)$ and $Id:C(v)\rightarrow C(v)$ induce a map $\varphi:C(\overline{\alpha})\rightarrow C(v)$, and it is straightforward to check the equalities $\varphi\Theta\psi\beta'=\varphi\Theta I$.\\
To finish the proof, any morphism in $Ho\mc{D}$ is represented by a zig-zag $A\rightarrow T\stackrel{e}{\leftarrow}B$, with $e\in\mrm{E}$. Then the previous case, together with \htmlref{{TR 3}}{TR3}, implies the octahedron in the general case.
\end{proof}

Next we define a generalization of
$-Id:\Sigma B\rightarrow \Sigma B$, also valid in the non-additive (even non-pointed) case. It comes from the one existing at the simplicial level \cite[p. 47]{Vo}.

Let $B$ be an object of $\mc{D}$. In $\simp\mc{D}$ we have the map $I_B:B\times\Dl\rightarrow CB$, where $CB$ denotes de cone of the identity of $B\times\Dl$. Set $\Lambda_2^1 B=C(I_B)\in\simp\mc{D}$ and $\Lambda B=C(B\rightarrow \ast)$. Consider the maps $P_1,P_2:\Lambda_2^1 B\rightarrow \Lambda B$ defined as follows. On one hand, $P_1$ sends $CB$ to $\ast$, and $B$ to $B$; on the other, $P_2$ sends $C(B)$ to $\Lambda B$, and $B$ to $\ast$.

\begin{defi}\label{signoMenos}
Set $\Sigma^1_2B=\mbf{s}\Lambda_2^1 B$, and $p_i=\mbf{s}P_i:\Sigma^1_2B\rightarrow \Sigma B$, $i=1,2$. As $c(B)\rightarrow \ast$ is an equivalence, then $p_1$ is so by lemma \ref{C3}. Define
$m_B:\Sigma B\rightarrow \Sigma B$ as the composition
$$ \xymatrix@M=4pt@H=4pt@R=15pt@C=15pt{\Sigma B \ar[r]^{p_1^{-1}}& \Sigma^1_2B\ar[r]^{p_2} & \Sigma B}.$$
\end{defi}

\begin{lema}\label{lemaSignoMenos}
The map $m_{-}$ is a natural transformation, that is, $m_{B'} {\Sigma f}={\Sigma f} m_B:\Sigma B\rightarrow \Sigma B'$ for any $f:B\rightarrow B'$ in $Ho\mc{D}$. Also, $m_B$ is an isomorphism of $Ho\mc{D}$ such that $m_B^2=Id$.
\end{lema}

\begin{proof} The first assertion follows trivially from the definition of $m$. Let us see that $m_B^2:\Sigma B\rightarrow \Sigma B$ is the identity in $Ho\mc{D}$. By lemma \ref{CilindroIterado}, the interchange of the terms in the coproduct $(\Lambda^1_2 B)_n$ yields an automorphism $\Theta:\Lambda^1_2 B\rightarrow \Lambda^1_2 B$ of $\simp\mc{D}$ such that $\Theta P_1 =P_2$. But $\Theta^2=Id$, so $m_B^2=Id$.
\end{proof}

\begin{prop}[\textbf{TR 2}]\label{TR2}
If
$A\stackrel{u}{\rightarrow} B\stackrel{v}{\rightarrow}
C\stackrel{w}{\rightarrow} \Sigma A$ is a cofiber sequence,
then so is
\begin{equation}\label{AuxTR2}\xymatrix@M=4pt@H=4pt@R=15pt@C=19pt{B\ar[r]^-{v}&  C\ar[r]^-{w} &  \Sigma A \ar[r]^-{m_B \Sigma u} & \Sigma B}.\end{equation}
\end{prop}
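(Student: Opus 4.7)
My plan is to reduce (\ref{AuxTR2}) to the simplicial cofiber sequence coming from the simplicial inclusion $I:B\times\Dl\to C(u)$ of $B$ into the simplicial cone, and then to identify the last two terms explicitly with $\Sigma A$ and $m_B\Sigma u$ using natural simplicial maps.

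First I would replace $v$ by its canonical representative $i:B\to c(u)$ and apply lemma \ref{CofSeqDlD} to the simplicial sequence $B\xrightarrow{I} C(u)\xrightarrow{I'} C(I)\xrightarrow{P'}\Lambda B$. This yields a cofiber sequence $B\xrightarrow{\mbf{s}I} c(u)\xrightarrow{\mbf{s}I'}\mbf{s}C(I)\xrightarrow{\mbf{s}P'}\Sigma B$ in $Ho\mc{D}$, whose first three terms are canonically isomorphic to those of the cofiber sequence of $v$ by lemma \ref{sCyls} and the equivalence $\rho_B$ of axiom $\mathbf{(S\,5)}$. So the task reduces to producing an isomorphism $\omega:\mbf{s}C(I)\xrightarrow{\sim}\Sigma A$ in $Ho\mc{D}$ satisfying $\omega\circ\mbf{s}I'=w$ and $\mbf{s}P'=m_B\Sigma u\circ\omega$.

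For $\omega$ I would take $\omega=\mbf{s}\pi$, where $\pi:C(I)\to\Lambda A$ is the simplicial map obtained from the pushout description $C(I)=C(u)\cup_B CB$ (with $CB=C(Id_B)$) by collapsing the $CB$-summand and the $B$-vertex of $C(u)=B\cup_A CA$ to the basepoint of $\Lambda A$, while keeping $CA$ identically. A direct check on the pushouts gives $\mbf{s}\pi\circ\mbf{s}I'=\mbf{s}P=w$. To show $\mbf{s}\pi\in\mrm{E}$, I would assemble a $3\times 3$ diagram to which lemma \ref{CilindroIterado} applies, in such a way that one column records the map $CB\to\ast$; since $\mbf{s}CB$ is acyclic by axiom $\mathbf{(S\,7)}$ applied to $Id_B$, lemma \ref{C3} then forces $\mbf{s}\pi\in\mrm{E}$.

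To identify the connecting map with $m_B\Sigma u$, I would construct a simplicial map $\phi:C(I)\to\Lambda_2^1 B=CB\cup_B CB$ as the identity on the $CB$-summand of $C(I)$ (landing in the first cone of $\Lambda_2^1 B$) and as the functorial cone $Cu:CA\to CB$ on the $A$-cylinder (landing in the second cone), with well-definedness at the $A$-gluing following from the fact that both summands meet in $u(A)\subset B$. Routine but careful simplicial checks yield $P_1\phi=P'$ and $P_2\phi=\Lambda u\circ\pi$. Applying $\mbf{s}$ and using that $p_1$ is an equivalence, the first identity gives $\mbf{s}\phi=p_1^{-1}\mbf{s}P'$ in $Ho\mc{D}$; substituting in the second yields $\Sigma u\circ\mbf{s}\pi=p_2\mbf{s}\phi=p_2p_1^{-1}\mbf{s}P'=m_B\mbf{s}P'$. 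The involutivity $m_B^2=Id$ from lemma \ref{lemaSignoMenos} then rearranges this to $\mbf{s}P'=m_B\Sigma u\circ\mbf{s}\pi$, which is the desired identification.

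The main obstacle I anticipate is the verification that $\mbf{s}\pi\in\mrm{E}$. Intuitively it just says that collapsing a contractible simplicial piece yields an equivalence after $\mbf{s}$, but no axiom of a simplicial descent category applies directly to pushouts of arbitrary maps, so the $3\times 3$ diagram of lemma \ref{CilindroIterado} must be arranged so that the comparison $\pi$ arises as a map of iterated Cyl's whose ``column'' simples are controlled by $\mathbf{(S\,7)}$. The identifications $P_1\phi=P'$ and $P_2\phi=\Lambda u\circ\pi$, while conceptually transparent from the pushout pictures, require tracking several coproduct summands; this is where the sign $m_B$ ultimately emerges from the swap-symmetry underlying definition \ref{signoMenos}.
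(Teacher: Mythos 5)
Your proposal is correct and follows essentially the same route as the paper: reduce to the simplicial sequence $B\xrightarrow{I}C(u)\xrightarrow{I'}C(I)\to\Lambda B$ via lemma \ref{CofSeqDlD}, collapse $CB$ to get the comparison $C(I)\to\Lambda A$, prove it is an equivalence by combining lemma \ref{CilindroIterado} with lemma \ref{C3} (using that $\mbf{s}CB$ is acyclic), and identify the connecting map through the map to $\Lambda_2^1B$, the relations with $P_1,P_2$, and $m_B^2=Id$. Your $\pi$ and $\phi$ are exactly the paper's $\phi$ and $\overline{u}$, so no further comment is needed.
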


\begin{proof} We can assume that our given cofiber sequence is induced by
a map $u:A\rightarrow B$ in $\mc{D}$. That is, $C=c(u)$, $v=i:B\rightarrow c(u)$ and $w=p:c(u)\rightarrow \Sigma A$. Let us prove that (\ref{AuxTR2}) is isomorphic (in $Ho\mc{D}$) to the simple of the sequence $B\stackrel{I}{\rightarrow} C(u)\stackrel{I'}{\rightarrow} C(I)\stackrel{P_B}{\rightarrow} \Lambda B$. We will find a map $\psi:C(I)\rightarrow \Lambda A$ with $\mbf{s}\psi\in\mrm{E}$ and such that the simple of diagram
$$\xymatrix@M=4pt@H=4pt@R=3pt@C=6pt{
 B\ar[rr]^-{I}\ar[dd]_{Id} &&  C(u)\ar[dd]_{Id} \ar[rr]^-{I'}         && C(I) \ar[rr]^-{P_B} \ar[dd]_{\phi}&&  \Lambda B & \\
              &&                    &&                                   && &  \Lambda^2_1 B \ar[lu]_{P_1}\ar[ld]^{P_2}\\
 B\ar[rr]^-{I} &&  C(u) \ar[rr]^{P_A} && \Lambda A \ar[rr]^-{\Lambda u}     && \Lambda B & }
$$
commutes in $Ho\mc{D}$.
The maps $P_A:C(u)\rightarrow \Lambda A$ and $I_\ast:\ast\rightarrow \Lambda A$ are such that $P_A I$ is equal to $I_\ast$ composed with the trivial map $B\rightarrow\ast$. Then, they produce $\phi:C(I)\rightarrow \Lambda A$
with $\phi I'=P_A$. On the other hand, $u$ induces a map $u':C(u)\rightarrow CB$ with $u'I=I_B:B\rightarrow CB$. Then, $u'$ induces $\overline{u}:C(I)\rightarrow \Lambda_2^1 B=C(I_B)$. It follows from definitions that $P_2\overline{u}=\Lambda u \phi$ and $P_1\overline{u}=P_B$. Therefore, the simple of the above diagram commutes in $Ho\mc{D}$. As $m_B^2=Id$, then $m_B=\mbf{s}P_2^{-1}\mbf{s}P_1$ and $\mbf{s}\phi$ is part of a morphism of triangles.\\
To finish it remains to see that $\mbf{s}\phi\in\mrm{E}$. For, by lemma \ref{CilindroIterado}, $C(I)$ is isomorphic to the cone of the map $w:A\rightarrow CB$ induced by $u$, and $\phi$ corresponds to the map $C(w)\rightarrow \Lambda A$ that sends $CB$ to $\ast$. But this is an equivalence by lemma \ref{C3}.
\end{proof}


Recall that the dual of a simplicial descent category is by definition a cosimplicial descent category. In this case, a \textit{path}
functor in $\mc{D}$ is induced by $path (f,g)=\mbf{s}Path (f,g)$. More concretely, if $X\in\simp\mc{D}$ and $K$ is a pointed simplicial finite set, then $X^{K}$ is in degree $n$ equal to $\prod_{K_n}X^n$. If $C\stackrel{g}{\rightarrow}A\stackrel{f}{\leftarrow}B$ are maps in $\mc{D}$, then $Path(f,g)$ is the pullback of $A^{\Dl[1]}\stackrel{d^0\times d^1}{\rightarrow}A\times A$ and $B\times C\stackrel{f\times g}{\rightarrow} A\times A$.

The dual to the suspension is then the \textit{loop} functor
$\Omega:\mc{D}\rightarrow\mc{D}$, defined as
$\Omega(X)=path(0\rightarrow X \leftarrow 0)$, and this time the
fiber sequences considered are
$$ \Omega X \rightarrow path(f) \rightarrow X\stackrel{f}{\rightarrow} Y$$
All results concerning cofiber sequences can be dualized for fiber sequences, inducing a `left' triangulated structure on $Ho\mc{D}$.

\begin{ejs}\mbox{}\\
\indent \textbf{Differential graded algebras:} Let $\mbf{Dga}(R)$ be the category of differential graded $R$-algebras (not necessarily commutative, and positively graded) over a commutative ring $R$. The (normalized) Alexander-Whitney simple $\mbf{s}_{AW}:\simp\mbf{Dga}(R)\rightarrow \mbf{Dga}(R)$ comes from the normalized simple of cochain complexes of $R$-modules and endows $\mbf{Dga}(R)$ with a structure of cosimplicial descent category, where $\mrm{E}$=quasi-isomorphisms \cite{R1}.\\
If $f:B\rightarrow A$ and $g:C\rightarrow A$ are morphisms of differential graded algebras, then $path_{dga}(f,g)$ has as underlying cochain complex the object  $path(f,g)$ in $Ch (R-mod)$, while the product in $path_{dga}(f,g)$ is induced by those of $A$, $B$ and $C$ (see \cite{G} for details). The deduced fiber sequences are studied in loc. cit. as well, where an analogous result to \ref{TR2} is proven.\\
\indent \textbf{Commutative differential graded algebras:} Denote by $\mbf{Cdga}(k)$ the category of commutative dif{f}erential
graded algebras over a field $k$ of characteristic 0.
Navarro's Thom-Whitney simple
\cite{N} $\mbf{s}_{TW}:\simp\mbf{Cdga}(k)\rightarrow\mbf{Cdga}(k)$ gives rise to a cosimplicial descent
structure on $\mbf{Cdga}(k)$, where $\mrm{E}=$quasi-isomorphisms \cite{R1}.
We deduce in this case a functor $path_{cdga}$, such that the underlying cochain complex of $path_{cdga}$ is quasi-isomorphic to the usual path functor in cochain complexes. The resulting distinguished triangles verify then theorem \ref{struct triangulada}, and the forgetful functor from $\mbf{Cdga}$ to cochain complexes of $k$-vector spaces sends a cofiber sequence to a distinguished triangle of $D(k-mod)$.
\end{ejs}

If the category $\mc{D}$ is an additive simplicial descent category, we deduce a \textit{suspended}
(or right triangulated) category structure \cite{KV} on $Ho\mc{D}$. We will prove later that $Ho\mc{D}$ is always additive for a (not necessarily additive) stable simplicial descent category $\mc{D}$. Stable means that the suspension is an equivalence of categories. But the additive and non stable case is still interesting. It covers, for instance, the case $\mrm{CF}^c\mc{A}$ of uniformly bounded-below (regularly) filtered cochain complexes, described later. The induced suspended structure gives rise to a Verdier's triangulated structure on the homotopy category of bounded-below filtered complexes. It is not obtained directly as the homotopy category of a simplicial descent category. The reason is that the simple $\mbf{s}:\Dl\mrm{CF}^c\mc{A}\rightarrow \mrm{CF}^c\mc{A}$ does not preserve regular filtrations in the (non-uniformly) bounded-below case.

\begin{defi}\cite{R1}
An \textit{additive} simplicial descent category is by definition
an additive category $\mc{D}$ endowed with a simplicial descent structure $(\mc{D},\mrm{E},\mbf{s},\mu,\lambda)$ such that $\mbf{s}$ is an additive functor and $\mu$ is an
isomorphism in $Fun_{ad}(\simp\simp\mc{D},\mc{D})[\mrm{E}^{-1}]$.
Here $Fun_{ad}(\simp\simp\mc{D},\mc{D})$ is the category of
additive functors from $\simp\simp\mc{D}$ to $\mc{D}$.
\end{defi}

\begin{cor} If $\mc{D}$ is an additive simplicial descent category, then $Ho\mc{D}$ is a suspended category \cite{KV}. In addition, a descent functor $\mc{D}\rightarrow\mc{D}'$ induces a functor of suspended categories $F:Ho\mc{D}\rightarrow Ho\mc{D}'$, that is, it preserves cofiber sequences.
\end{cor}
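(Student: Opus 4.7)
The plan is to upgrade Theorem \ref{struct triangulada} to the Keller--Vossieck axioms of a suspended category. Three steps are needed: (a) additivity of $Ho\mc{D}$; (b) additivity of $\Sigma$; (c) identification of the homotopical sign $m_B$ of Definition \ref{signoMenos} with $-\mrm{Id}_{\Sigma B}$ in the additive case. The functoriality assertion will then be straightforward.

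For (a), since $\mbf{s}$ is assumed additive, the cylinder construction is additive in its arguments and so is the iterated gluing defining admissible cylinders. Hence the homotopy relation on $\mrm{Hom}_\mc{D}(A,B)$ is compatible with the abelian-group structure: homotopies $H:f\sim g$ and $H':f'\sim g'$ can be added (after gluing their supporting admissible cylinders) to produce a homotopy $f+f'\sim g+g'$. Thus $K\mc{D}$ is additive. Axiom $\mathbf{(S\,2)}$ states that $\mrm{E}$ is stable under finite biproducts, and Theorem \ref{MorfHoD} gives the calculus of left fractions in $K\mc{D}$; this is the classical setting in which the localization $Ho\mc{D}=K\mc{D}[\mrm{E}^{-1}]$ is again additive and $\delta:\mc{D}\to Ho\mc{D}$ is an additive functor. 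For (b), $\Sigma X=\mbf{s}\Lambda X$ is a composition of additive functors (coproducts and $\mbf{s}$), so it is additive on $\mc{D}$ and, via Lemma \ref{C3}, descends to an additive endofunctor of $Ho\mc{D}$.

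The main obstacle is (c). The simplicial object $\Lambda^1_2 B=C(I_B)$ is, degree-wise, built from two copies of the simplicial cone of $B$ glued along $B$ itself, and $P_1,P_2:\Lambda^1_2 B\to\Lambda B$ are the two quotient maps that collapse one cone summand and keep the other. The involution $\Theta$ of Lemma \ref{lemaSignoMenos} interchanges these two summands, so $\Theta P_1=P_2$. A direct calculation in the additive setting, using that the simple of the cone $CB$ is acyclic by Lemma \ref{C2} (applied to $\mrm{Id}_B$), shows that $\mbf{s}P_1+\mbf{s}P_2$ factors through an acyclic object and hence vanishes in $Ho\mc{D}$; equivalently $\mbf{s}P_2=-\mbf{s}P_1$. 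Inserting this into Definition \ref{signoMenos} gives $m_B=\mbf{s}P_2\circ\mbf{s}P_1^{-1}=-\mrm{Id}_{\Sigma B}$, and substituting into Proposition \ref{TR2} yields the standard rotation axiom. Combined with TR1, TR3 and TR4 from Theorem \ref{struct triangulada}, this shows $Ho\mc{D}$ is a suspended category in the sense of \cite{KV}.

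For the last statement, any descent functor $\psi:\mc{D}\to\mc{D}'$ between additive simplicial descent categories preserves the zero object and finite biproducts up to equivalences by condition II of the definition, so the induced functor $F:Ho\mc{D}\to Ho\mc{D}'$ is additive. It commutes with $\Sigma$ up to natural isomorphism via $\Theta:\psi\mbf{s}\to\mbf{s}'\psi$, and by Theorem \ref{struct triangulada} it sends cofiber sequences to cofiber sequences. Hence $F$ is a functor of suspended categories.
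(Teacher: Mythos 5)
Your proposal is correct, but your treatment of the key point (c) takes a genuinely different route from the paper. The paper's written argument deduces $m_B=-\mrm{Id}_{\Sigma B}$ from Proposition \ref{A1cogrupo}: one has two compatible group structures on $\mrm{Hom}_{Ho\mc{D}}(\Sigma B,\Sigma B)$ -- the one induced by the cogroup map $d_B$ (for which $m_B$ is the inverse) and the one coming from the additive structure via the codiagonal -- and an Eckmann--Hilton argument forces them to coincide, so the cogroup inverse is $-\mrm{Id}$. You instead argue directly that $\mbf{s}P_1+\mbf{s}P_2$ vanishes in $Ho\mc{D}$ because it factors through an acyclic object; this is exactly the ``direct'' alternative the paper mentions but does not carry out, and it is correct: since $\Lambda^1_2B\cong CB\cup_B CB$, the fold map $\nabla:\Lambda^1_2B\to CB$ followed by the projection $CB\to\Lambda B$ equals $P_1+P_2$ degree-wise in the additive setting, and $\mbf{s}CB=c(\mrm{Id}_B)$ is acyclic (Lemma \ref{C2} or axiom $\mathbf{(S\,7)}$), so in the additive $Ho\mc{D}$ (where $\ast=0$) the sum is zero and $m_B=p_2p_1^{-1}=-\mrm{Id}$. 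You should, however, actually exhibit this factorization rather than just invoke ``a direct calculation''. Your route is more elementary and independent of Section 4, whereas the paper's is shorter given that Proposition \ref{A1cogrupo} is needed anyway for the stable case and avoids any explicit simplicial bookkeeping. One small repair in part (a): rather than ``adding homotopies after gluing cylinders'', it is cleaner to note that if $H$ is a homotopy $f\sim g$ on an admissible cylinder $(\widetilde A,i,j,\sigma)$ then $H+h\sigma$ is a homotopy $f+h\sim g+h$ on the same cylinder, and then use transitivity; with that, compatibility of the homotopy relation with sums, hence additivity of $K\mc{D}$ and of its left-fraction localization $Ho\mc{D}$, follows as you say.
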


\begin{proof} If $\mc{D}$ is additive, one easily checks that $Ho\mc{D}$ is additive as well using \ref{MorfHoD}. The suspension $\Sigma$ is additive since it is composition of additive functors. To finish it remains to see that the `abstract' minus sign $m_B:\Sigma B\rightarrow \Sigma B$ is equal to $-Id_{\Sigma B}$ in $Ho\mc{D}$. This can be done directly, finding a homotopy between $m$ and $-Id$, or it can be deduced from proposition \ref{A1cogrupo}. Indeed, we have two group structures on $Hom_{Ho\mc{D}}(\Sigma B,\Sigma B)$. The cogroup object structure on $\Sigma B$ given by the map $d_B:\Sigma B\rightarrow \Sigma B\sqcup \Sigma B=\Sigma B\oplus\Sigma B$ of \ref{A1cogrupo} induces the first one. The group object structure given by the codiagonal $\Sigma B\oplus\Sigma B=\Sigma B\times \Sigma B \rightarrow \Sigma B$ induces the second one, that is the one coming from the sum in $\mc{D}$. By standard arguments these group structures agree, so in particular $m_B=-Id_{\Sigma B}$.
\end{proof}

\begin{ejs} We exhibit the suspended structures coming from additive (and non-stable) simplicial descent categories.

\textbf{Simplicial objects in additive$/$abelian categories:} If $\mc{A}$ is an additive or abelian category, consider $\mc{D}=\simp\mc{A}$. The simple functor $\mrm{D}:\simp \simp\mc{A}\rightarrow\simp\mc{A}$ is the diagonal functor of a bisimplicial object. Hence, our cofiber sequences are just the \textit{cofibration sequences} of \cite[5.3]{Vo}. They correspond to the usual `exact triangles' in $Ch_+\mc{A}$, the category of positive chain complexes of $\mc{A}$, through the functor $K:\simp\mc{A}\rightarrow Ch_+\mc{A}$ that takes as boundary map the alternate sum of the face maps in a simplicial object. The same holds for the normalized version of $K$, which is an equivalence of categories \cite[$\S 22$]{May}.

\textbf{Filtered cochain complexes:} Given an abelian category $\mc{A}$ and an integer $c$, let $\mrm{CF}^c\mc{A}$ be the category of filtered cochain complexes $(A,\mrm{F})$, where
$A$ is a cochain complex over $\mc{A}$ that is equal to 0 in degrees lower than $c$, and $\mrm{F}$ is a decreasing biregular filtration of $A$.\\
The normalized simple $(\mbf{s}_N,\mbf{s}_N)$ comes from the one of cochain complexes, and $\mrm{E}$=filtered quasi-isomorphisms are part of an additive simplicial descent structure on $\mrm{CF}^c\mc{A}$ (see \cite{R1} for more detail). The deduced path object of the filtered maps $(C,\mrm{H})\stackrel{g}{\rightarrow}(A,\mrm{F})\stackrel{f}{\leftarrow}(B,\mrm{G})$ is the filtered cochain complex $(path(f,g),\mrm{M})$, where $path(f,g)^n=B^n\oplus A^{n-1}\oplus C^n$ and
$$\mrm{M}^kpath(f,g)^n=\mrm{G}^kB^n\oplus \mrm{F}^kA^{n-1}\oplus \mrm{H}^kC^n\ .$$
As a corollary we get the classical left triangulated (or cosuspended) structure on the filtered derived category of uniformly bounded-below complexes. Note that the category of (arbitrarily) bounded-below complexes $\mrm{CF}^b\mc{A}$ is the union of the $\mrm{CF}^c\mc{A}$ as the integer $c$ varies. The suspended structures on the filtered derived categories $\mrm{CF}^c\mc{A}$ are compatible. Therefore they induce a Verdier's triangulated structure on the usual filtered derived category $\mrm{CF}^b\mc{A}[\mrm{E}^{-1}]$.\\
We can also consider another additive simplicial descent structure on $\mrm{CF}^c\mc{A}$ \cite{R1}. The equivalences considered now are the $E_2$-isomorphisms, that is, those maps inducing isomorphism on the second term of the associated spectral sequences. In this case the simple functor is $(\mbf{s}_N,\delta)$, where $\delta$ denotes the diagonal filtration. The path object of $f$ and $g$ is $(path(f,g),\mrm{N})$, where $path(f,g)^n=B^n\oplus A^{n-1}\oplus C^n$ as before, but
$$\mrm{N}^kpath(f,g)^n=\mrm{G}^kB^n\oplus \mrm{F}^{k-1}A^{n-1}\oplus \mrm{H}^kC^n\ .$$
We get a cosuspended structure on the localized category of $\mrm{CF}^c\mc{A}$ with respect to the $E_2$-isomorphisms. Therefore, the localized category of $\mrm{CF}^c\mc{A}[E_2^{-1}]$ is a triangulated category. In addition, we deduce that Deligne's decalage functor $\mrm{Dec}:\mrm{CF}^c\mc{A}[\mrm{E}^{-1}]\rightarrow\mrm{CF}^c\mc{A}[E_2^{-1}]$ preserves this structures, since it is a functor of additive simplicial descent categories.
As before, we can induce a Verdier's triangulated structure on $\mrm{CF}^b[E_2^{-1}]$. Then,  $\mrm{Dec}:\mrm{CF}^b\mc{A}[\mrm{E}^{-1}]\rightarrow\mrm{CF}^b\mc{A}[E_2^{-1}]$ is a functor of triangulated categories.
\end{ejs}



\section{Cogroup structures in simplicial descent categories.}

In this section we deal with a simplicial descent category $\mc{D}$ such that $Ho\mc{D}$ is pointed, that is,
the map $0\rightarrow  \ast \in\mrm{E}$. In this case the following proposition holds, similarly to the case of pointed topological spaces, or more generally, of pointed model categories.

\begin{defi} Denote by $\simp {}_f Set$ the category of simplicial finite sets, and by $\simp{}_fSet_\ast$ the one of pointed simplicial finite sets. The action $\boxtimes:\simp {}_f Set\times\simp\mc{D}\rightarrow\simp\mc{D}$ induces an action $\otimes:\simp{}_f Set_\ast\times\simp\mc{D}\rightarrow\simp\mc{D}$ \cite{Vo}. Given $K\in\simp {}_f Set_\ast$ and $X\in\simp\mc{D}$, then $K\otimes X$ is the pushout of the maps $\Dl[0]\boxtimes X\rightarrow \ast$ and $\Dl[0]\boxtimes X\rightarrow K\boxtimes X$, where $\Dl[0]\rightarrow K$ is the distinguished point of $K$.\\
Recall that the coproduct in $\simp{}_fSet_\ast$ of $K$ and $L$ is $K\vee L$, the quotient of $K\sqcup L$ by the set of the  two base points of $K$ and $L$.
Then the natural map
$$(K\otimes X)\sqcup(L\otimes X)\rightarrow (K\vee L)\otimes X\ ,$$
is a degree-wise equivalence (since $\ast\sqcup \ast\rightarrow \ast$ is in $\mrm{E}$).\\
If we choose $d^1(\Dl[0])$ as the base point of $\Dl[1]$, then $X\otimes\Dl[1]$ is just $CX$, the cone of $X$. Also, if $A$ is in $\mc{D}$ then $\Sigma A$ is canonically isomorphic to $\mbf{s}(A\otimes S^1)$ in $Ho\mc{D}$, where $S^1= \Dl[1]/\mrm{sk}^0 \Dl[1]$ is the simplicial circle.
Indeed, $\ast\sqcup \ast\rightarrow\ast$ induces $\nu: \Lambda A= C(A\rightarrow\ast)\rightarrow A\otimes S^1$, so $\mbf{s}\nu :\Sigma A\rightarrow \mbf{s}(A\otimes S^1)$ is in $\mrm{E}$.

If $K\in\simp {}_f Set_\ast$, define $K\wedge \Dl[1]$ as the quotient of $K\times\Dl[1]$ by $\Dl[0]\times \Dl[1]$, where $\Dl[0]\subseteq K$ is the base-point of $K$. Then, $K\wedge\Dl\in\simp {}_f Set_\ast$ with the obvious base-point, and we have maps $d^0,d^1:K\rightarrow K\wedge\Dl[1]$ in $\simp {}_f Set_\ast$ induced by $d^0,d^1,\Dl[0]\rightarrow\Dl[1]$. Two maps $f,g:K\rightarrow L$ are \textit{simplicially homotopic} if there exists $H:K\wedge\Dl[1]\rightarrow L$ with $Hd^0=f$ and $Hd^1=g$.
\end{defi}

\begin{lema} If $f,g:K\rightarrow L$ are simplicially homotopic in $\simp {}_f Set_\ast$ and $A\in\mc{D}$, then $\mbf{s}(f\otimes A)=\mbf{s}(g\otimes A)$ in $Ho\mc{D}$.
\end{lema}

\begin{proof} Since $Ho\mc{D}$ is pointed, the natural map $Cyl(A\otimes K)\rightarrow A\otimes (K\wedge \Dl[1])$ is a degree-wise equivalence, and the statement follows from lemma \ref{relHomotopia}.
\end{proof}


\begin{prop}\label{A1cogrupo}
The correspondence $A\rightarrow Hom_{Ho\mc{D}}(\Sigma A,-)$ is a functor from $Ho\mc{D}$ to $Groups$. That is, $\Sigma A$ is a cogroup object in $Ho\mc{D}$. In addition, $\Sigma^k A$ is an abelian cogroup object if $k\geq 2$.
If $f:A\rightarrow B$ is any map in $\mc{D}$, then $\Sigma A$ coacts on $c(f)$.
\end{prop}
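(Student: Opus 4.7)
The plan is to transfer the standard cogroup structure of the simplicial circle $S^1\in\simp {}_fSet_\ast$ to $\Sigma A\simeq \mbf{s}(A\otimes S^1)$ via the action $\otimes$, exploiting the preceding lemma that simplicial homotopies descend to equalities in $Ho\mc{D}$.

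First I would fix in $\simp {}_fSet_\ast$ the standard pinch map $\mu\colon S^1\to S^1\vee S^1$, the collapse $\epsilon\colon S^1\to\ast$, and an orientation-reversing involution $\iota\colon S^1\to S^1$; these satisfy coassociativity, counit and coinverse only up to simplicial homotopy. Applying $\mbf{s}(A\otimes-)$ and inverting the degreewise equivalence $(K\otimes X)\sqcup(L\otimes X)\to(K\vee L)\otimes X$ inside $Ho\mc{D}$ yields morphisms
$$d_A\colon\Sigma A\to\Sigma A\sqcup\Sigma A,\quad e_A\colon\Sigma A\to\ast,\quad m'_A\colon\Sigma A\to\Sigma A,$$
where the coproduct in $Ho\mc{D}$ agrees with the coproduct in $\mc{D}$ by Corollary \ref{HoDCopro}. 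The preceding lemma guarantees that the simplicial homotopies on $S^1$ witnessing the cogroup axioms survive after smashing with $A$ and become equalities in $Ho\mc{D}$; hence $(\Sigma A,d_A,e_A,m'_A)$ is a cogroup object, and functoriality of $\mu,\epsilon,\iota$ in the $X$-variable gives the functor $A\mapsto Hom_{Ho\mc{D}}(\Sigma A,-)$ into $Groups$.

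For $k\geq 2$, write $\Sigma^k A\simeq\mbf{s}(A\otimes(S^1)^{\wedge k})$. Each of the $k$ smash factors $S^1$ supplies its own pinch map, and any two of them commute in the Eckmann--Hilton sense up to simplicial homotopy; transferring as above forces the resulting cogroup structure on $\Sigma^k A$ to be abelian.

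For the coaction of $\Sigma A$ on $c(f)$, the key input is a natural map $\Psi\colon C(f)\to C(f)\vee(A\otimes S^1)$ in pointed simplicial objects, obtained by pinching the `cone coordinate' of $C(f)=Cyl(\ast\leftarrow A\stackrel{f}{\to} B)$ so as to produce one copy of $C(f)$ together with one copy of the suspension of $A$. Applying $\mbf{s}$, inverting the coproduct comparison, and composing with the equivalence $\mbf{s}\Lambda A\to\Sigma A$ used in the proof of Lemma \ref{CofSeqDlD} yields the coaction $c(f)\to c(f)\sqcup\Sigma A$ in $Ho\mc{D}$; its coassociativity and counit with respect to $(d_A,e_A)$ reduce once again to simplicial homotopies pushed through the lemma. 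The principal obstacle throughout is bookkeeping: one must realize every cogroup and coaction axiom as a simplicial homotopy of the precise shape $K\wedge\Delta[1]\to L$ in $\simp {}_fSet_\ast$, since this is the only form of homotopy to which the preceding lemma applies after smashing with $A$.
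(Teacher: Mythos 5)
There is a genuine gap at the very first step: the maps you propose to transfer do not exist in $\simp {}_f Set_\ast$. Since $S^1=\Dl[1]/\mrm{sk}^0\Dl[1]$ has a single nondegenerate $1$-simplex, there is no simplicial pinch map $\mu:S^1\rightarrow S^1\vee S^1$ (a path traversing two circles cannot be carried by one $1$-simplex), and the only non-constant simplicial self-map of $S^1$ is the identity, so there is no orientation-reversing involution $\iota$ either; likewise there is no simplicial map pinching the cone coordinate $\Dl[1]$ into $S^1\vee\Dl[1]$. In the classical simplicial setting these comultiplications are only obtained through zig-zags, e.g.\ Gabriel--Zisman describe the cogroup structure of $S^1$ via the anodyne extension $\Lambda^1[2]\hookrightarrow\Dl[2]$ --- and this is precisely what cannot be used here, because there is no axiom guaranteeing that $\mbf{s}(A\otimes -)$ sends anodyne extensions into $\mrm{E}$. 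The paper circumvents this by replacing $S^1$ with the subdivided circle $\Omega$ (two copies of $\Dl[1]$ glued at both endpoints) and $\Dl[1]$ with $\overline{\Omega}$: the collapse $\alpha:\Omega\rightarrow S^1$ kills a cone $CA$ after tensoring, so $\mbf{s}(\alpha\otimes A)\in\mrm{E}$ by lemma \ref{C3}, and $d_A$, $w_A$ are defined as zig-zags $\mbf{s}(\pi\otimes A)\mbf{s}(\alpha\otimes A)^{-1}$, etc. Your plan needs this replacement made explicit, together with the verification (specific to the descent structure) that the wrong-way maps lie in $\mrm{E}$; without it the construction of $d_A$, $m'_A$ and the coaction never gets off the ground.

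A second, related omission is the coinverse and the abelianness. Since no simplicial flip of $S^1$ exists, the inverse cannot be induced from an involution; the paper instead shows that the previously defined map $m_A=p_2p_1^{-1}$ (the zig-zag through $\Sigma^1_2A=\mbf{s}C(I_A)$ of definition \ref{signoMenos}) is a coinverse, and the proof is not a transferred simplicial homotopy but an identification of the composite $(Id,m_A)\pi_A$ with the composition of consecutive maps in a cofiber sequence, hence trivial. Similarly, the counit axiom is proved by an explicit homotopy $H:\Omega\times\Dl[1]\rightarrow S^1$ built from the max/min retractions of $\Dl[1]\times\Dl[1]$ (here your idea of pushing simplicial homotopies through the preceding lemma is the right one, but the homotopy must be exhibited for $\Omega$, not for a nonexistent map on $S^1$), and the abelian case $k\geq 2$ is handled by a formal Eckmann--Hilton argument using $d_{\Sigma A}\equiv\Sigma d_A$ and naturality of $d$, not by a simplicial homotopy between smash-factor pinches. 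So the overall strategy (transfer structure along $A\otimes -$ and use the homotopy lemma) is in the right spirit, but as written the proof rests on maps and homotopies of finite simplicial sets that do not exist, which is exactly the difficulty the paper's construction is designed to avoid.
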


\begin{proof}
Define $\Omega\in\simp{}_f Set$ as the pushout
$$ \xymatrix@M=4pt@H=4pt@R=14pt@C=17pt{
 \Dl[0]\sqcup\Dl[0] \ar[r]^-{d^0\sqcup d^1}\ar[d]_{d^1\sqcup d^0} & \Dl[1]\ar[d]^p \\
 \Dl[1]\ar[r]^-q & \Omega}$$
Let $\Dl[0]\stackrel{d^1}{\rightarrow} \Dl[1]\stackrel{q}{\rightarrow} \Omega$ be the base-point of $\Omega$. Consider the map $\alpha:\Omega\rightarrow S^1$ such that $\alpha p$ is the projection $P:\Dl[1]\rightarrow S^1=\Dl[1]/\mrm{sk}^0\Dl[1]$, and $\alpha q$ is the trivial map factoring through $\Dl[0]$. If $A\in\mc{D}$, then $\Omega\otimes A$ agrees with $Cyl(CA\leftarrow A\rightarrow \ast)$ after identifying $\ast\sqcup \ast$ with $\ast$. The map $\alpha\otimes A$ sends $CA$ to $\ast$, so $\mbf{s}(\alpha\otimes A):\mbf{s}(\Omega\otimes A)\rightarrow\mbf{s}(S^1\otimes A)\equiv\Sigma A$ is an equivalence.\\
On the other hand, let $\pi:\Omega\rightarrow S^1\vee S^1$ be the map with $\pi q =i_1 P$ and $\pi p= i_2 P $, where $i_1,i_2:S^1\rightarrow S^1\vee S^1$ are the canonical inclusions. If $A\in\mc{D}$, define the map $d_A:\Sigma A\rightarrow \Sigma A\sqcup \Sigma A$ of $Ho\mc{D}$ through the composition
$$ \xymatrix@C=30pt{
\mbf{s}(S^1\otimes A)\ar[r]^-{\mbf{s}(\alpha\otimes A)^{-1}} & \mbf{s}\Omega\ar[r]^-{\mbf{s}(\pi\otimes A)} & \mbf{s}((S^1\vee S^1)\otimes A)}$$
Recall that $d_A$ endows $Hom_{Ho\mc{D}}(\Sigma A,-)$ with a group structure if and only if $d_A$ is associative, it has unit and inverse element.
Let us prove that $d_A$ has unit element, that is, that $\pi_1 d_A,\pi_2 d_A:\Sigma A\rightarrow \Sigma A$ are the identity in $Ho\mc{D}$.\\
Clearly $\pi_2 \pi =\alpha :\Omega\rightarrow S^1$, so $\pi_2 d_A=Id$. We state that $\pi_1\pi$ is simplicially homotopic to $\alpha$ in $\simp{}_f Set_\ast$. Indeed, consider the maps  $r,\widetilde{r}:\Dl[1]\otimes\Dl[1]\rightarrow\Dl[1]$ in $\simp{}_f Set$ given by $r(f,g)(i)=\mrm{max}\{f(i),g(i)\}$ and $\widetilde{r}(f,g)(i)=\mrm{min}\{f(i),g(i)\}$.
Then $r(d^0\times Id)$, $r(Id\times d^0)$, $\widetilde{r}(d^1\times Id)$ and $\widetilde{r}(Id\times d^1)$ are equal to $Id_{\Dl[1]}$. Also, $r(d^1\times Id)=r(Id\times d^1)$ factors through $d^1:\Dl[0]\rightarrow\Dl[1]$, while $\widetilde{r}(d^0\times Id)=\widetilde{r}(Id\times d^0)$ factors through $d^0$.\\
Let $H:\Omega\times\Dl[1]\rightarrow S^1$ be the map such that $H(q\times Id)=P r$ and $H(p\times Id)=P\widetilde{r}$. If $\ast\subset \Omega$ is the distinguished point of $\Omega$, then $H(\ast\times \Dl[1])$ is the base-point of $S^1$. Therefore, $H$ defines $\overline{H}:\Omega\wedge \Dl[1]\rightarrow S^1$, that is a homotopy between $\pi_1\pi$ and $\alpha$. It follows that $\pi_1 d_A=Id$ in $Ho\mc{D}$.\\
We will see next that the map $m_A:\Sigma A \rightarrow \Sigma A$ given in definition \ref{signoMenos} is indeed an inverse element for $d_A$. That is, the following composition factors through $\ast$ in $Ho\mc{D}$
\begin{equation}\label{mNeutro} \xymatrix@M=4pt@H=4pt@R=14pt@C=18pt{
  \Sigma A\ar[r]^-{d_A}& \Sigma A\sqcup \Sigma A\ar[r]^-{Id\sqcup m_A}& \Sigma A\sqcup \Sigma A \ar[r]^-{\delta_{\Sigma A}} & \Sigma A}\end{equation}
where $\delta_{\Sigma A}$ is the codiagonal. Set $\Omega A=Cyl(CA\leftarrow A \rightarrow \ast)\simeq \Omega\otimes A$, $\pi_A:\mbf{s}\Omega A\rightarrow \Sigma A\sqcup \Sigma A$ the map induced by $\mbf{s}(\pi\otimes A)$. Denote $\delta_{\Sigma A}(Id\sqcup m_A)$ by $(Id,m_A)$. By definition, the composition (\ref{mNeutro}) factors through $\ast$ if and only if $(Id,m_A)\pi_A:\mbf{s}\Omega A\rightarrow \Sigma A$ does. We will see that $(Id,m_A)\pi_A$ is isomorphic to the composition of consecutive maps in a cofiber sequence, so it is the trivial map.\\
Consider the maps $\delta'=(d^0, d^1):A\sqcup A\rightarrow Cyl(A)$ and $s^0:Cyl(A)\rightarrow A$. By lemma \ref{CilindroIterado}, $C(\delta')$ is isomorphic to $Cyl(CA\leftarrow A\rightarrow CA)$. Therefore, sending the $CA$ on the right to $\ast$ produces $f:C(\delta')\rightarrow \Omega A$ with $\mbf{s}f\in\mrm{E}$.
Consider the following diagram
$$ \xymatrix@M=4pt@H=4pt@R=12pt@C=23pt{
 \mbf{s}C(\delta') \ar[d]^{\mbf{s}f} \ar[r]^-p & (\Sigma A, \Sigma A) \ar[r]^-{\Sigma d^0\sqcup \Sigma d^1} \ar[d]_{Id\sqcup m_A} &\Sigma cyl(A)\ar[d]^{\Sigma s^0}  \\
 \mbf{s}\Omega A\ar[r]^-{\pi_A} & \Sigma A\sqcup \Sigma A \ar[r]^-{(Id,m_A)} & \Sigma A}$$
The right square commutes since $m_A^2=Id_{\Sigma A}$ and $(\Sigma d^0, \Sigma d^1)\Sigma s^0$ is the codiagonal $\delta_{\Sigma A}$. The left square is commutative as well. Indeed, $m_A=p_1^{-1}p_2=p_2^{-1}p_1$, where $p_1,p_2:\Sigma^1_2 A\rightarrow \Sigma A$ are the maps defined in \ref{signoMenos}.
There is a map $h:\mbf{s}C(\delta')\rightarrow \Sigma A \sqcup \Sigma^1_2 A  \simeq\mbf{s}Cyl(\Lambda A\leftarrow A\rightarrow CA)$ induced by $CA\rightarrow \Lambda A$. It follows from the definitions that $p_2h=p$ and $p_1h=\pi_Af$. Therefore $(Id\sqcup m_A)p=\pi_A \mbf{s}f$, and $d_A$ has inverse element.\\
It remains to check that $d_A$ is associative, that is, that $(d_A\sqcup Id)d_A=(Id_\sqcup d_A)d_A$ in $Ho\mc{D}$. It can be proved that $(Id\sqcup Id\sqcup m)(d_A\sqcup Id)d_A$ and $(Id\sqcup Id\sqcup m)(Id\sqcup d_A)d_A$ are both equal to the composition $\Sigma A\stackrel{\beta^{-1}}{\rightarrow} \mbf{s}C(\delta')\stackrel{\sigma}{\rightarrow} \Sigma A\sqcup \Sigma A\sqcup \Sigma A$, where $\beta$ sends both $CA$ in $C(\delta')$ to $\ast$, and $\sigma$ sends them to $\Lambda A$. The details are left to the reader.\\
It follows formally from the properties of $d_A$ that $\Sigma^2 A$ is an abelian cogroup object. Denote by $\tau_B:B\sqcup B\rightarrow B\sqcup B$ the isomorphism that interchanges the factors. Note that $d_{\Sigma A}\equiv\Sigma d_A:\Sigma^2A\rightarrow \Sigma^2 A\sqcup \Sigma^2 A$, since $Cyl(\Lambda f,\Lambda g)\equiv\Lambda Cyl(f,g)$ by lemma \ref{CilindroIterado}. Then, $\tau _{\Sigma^2 A} d_{\Sigma A}\equiv \Sigma (\tau_{\Sigma A}d_A)$. On the other hand,
$(\Sigma \pi_1\sqcup \Sigma \pi_2) d_{\Sigma A\sqcup \Sigma A}\equiv(\pi_1\sqcup \pi_2)(d_{\Sigma A}\sqcup d_{\Sigma A})=Id$. Then
$$\Sigma (\tau_{\Sigma A}d_A)=(\Sigma \pi_1\sqcup \Sigma \pi_2) d_{\Sigma A\sqcup \Sigma A} \Sigma (\tau_{\Sigma A}d_A)= (\Sigma \pi_1\sqcup \Sigma \pi_2)(\Sigma (\tau_{\Sigma A}d_A)\sqcup \Sigma (\tau_{\Sigma A}d_A))d_{\Sigma A}$$
The last equality holds since $d_{-}$ is a natural transformation between the functors $\Sigma$, $\Sigma \sqcup \Sigma:Ho\mc{D}\rightarrow Ho\mc{D}$. But $(\Sigma \pi_1\sqcup \Sigma \pi_2)(\Sigma (\tau_{\Sigma A}d_A)\sqcup \Sigma (\tau_{\Sigma A}d_A))=\Sigma (\pi_2 d_A)\sqcup \Sigma (\pi_1 d_A)=Id$. Therefore $d_{\Sigma A}=\tau_{\Sigma^2 A}d_{\Sigma A}$, and $\Sigma^2 A$ is abelian.\\
To finish, there is a natural coaction $w_A:cA=\mbf{s}(\Dl[1]\otimes A)\rightarrow \Sigma A \sqcup cA \simeq \mbf{s}(( S^1 \vee \Dl[1])\otimes A)$ described as follows. Consider $\overline{\Omega}\in\simp{}_f Set_\ast$ given by the pushout
$$ \xymatrix@M=4pt@H=4pt@R=14pt@C=17pt{
 \Dl[0] \ar[r]^-{ d^1}\ar[d]_{ d^0} & \Dl[1]\ar[d]^{\overline{p}} \\
 \Dl[1]\ar[r]^-{\overline{q}} & {\overline{\Omega}}  {} }$$
and with base-point $\Dl[0]\stackrel{d^1}{\rightarrow} \Dl[1]\stackrel{\overline{q}}{\rightarrow} \overline{\Omega}$. We have maps $\overline{\alpha}:\overline{\Omega}\rightarrow \Dl[1]$ and $\overline{\pi}:\Omega\rightarrow S^1\vee\Dl[1]$ with $\overline{\alpha}\,\overline{p}=Id$, $\overline{\alpha}\,\overline{q}=d^1s^0$, $\overline{\pi}\,\overline{p}=i_2$, $\overline{\pi}\,\overline{q}=i_1 P$. As before, $\mbf{s}(\overline{\alpha}\otimes A)\in\mrm{E}$. Indeed, $\mbf{s}(\overline{\Omega}\otimes A)\simeq\mbf{s}Cyl(CA\leftarrow A\rightarrow A)$ and $\mbf{s}(\overline{\alpha}\otimes A)$ comes from $CA\rightarrow \ast$. Then, $w_A$ is defined as $\mbf{s}(\overline{\pi}\otimes A)\mbf{s}(\overline{\alpha}\otimes A)^{-1}$. By definition $\pi_2 w_A=Id_{cA}$, while the proof of the equality $(Id\sqcup w_A)w_A=(d_A\sqcup Id)w_A$ is analogous to the one of the associativity of $d_A$.
If $f:A\rightarrow B$, then $\omega_A$ extends to a coaction $w_f: c(f)\rightarrow \Sigma A \sqcup c(f)$, since $C(f)$ is the pushout in $\simp\mc{D}$ of $A\rightarrow CA$ along $f:A\rightarrow B$.
\end{proof}

\begin{obs} In \cite[p. 75]{GZ}, the author proves that $S^1$
is a cogroup object in the category $\simp Set_\ast$ modulo homotopy, localized by anodyne extensions. The corresponding map $\varphi:S^1\rightarrow S^1\vee S^1$ is described by means of $\Lambda^1[2]\hookrightarrow\Dl[2]$. We give here an alternative description of $\varphi$, better suited for our class of equivalences.
\end{obs}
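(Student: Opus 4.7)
The plan is to mimic the classical topological construction of the cogroup structure on a suspension using the pinch map $S^1\to S^1\vee S^1$. In our setting, I would work with the action $\otimes:\simp{}_fSet_\ast\times\simp\mc{D}\rightarrow\simp\mc{D}$ and represent $\Sigma A$ up to equivalence as $\mbf{s}(S^1\otimes A)$. To get a well-defined map in $Ho\mc{D}$, I would not construct a simplicial map $S^1\to S^1\vee S^1$ directly (such a map would only exist up to homotopy), but instead construct an intermediate pointed simplicial finite set $\Omega$, built as the pushout of two copies of $\Dl[1]$ glued along opposite endpoints, together with two natural maps $\alpha:\Omega\to S^1$ and $\pi:\Omega\to S^1\vee S^1$ in $\simp{}_fSet_\ast$. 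After tensoring with $A$ and applying $\mbf{s}$, the map $\mbf{s}(\alpha\otimes A)$ is an equivalence (essentially because $CA\to\ast$ becomes an equivalence once $Ho\mc{D}$ is pointed), and I would define the coproduct as $d_A:=\mbf{s}(\pi\otimes A)\circ\mbf{s}(\alpha\otimes A)^{-1}$.

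Next I would verify the group axioms. For the unit, one projection $\pi_2 d_A=Id$ holds on the nose because $\pi_2\circ\pi=\alpha$ in $\simp{}_fSet_\ast$; the other projection requires exhibiting a simplicial homotopy $\pi_1\pi\simeq\alpha$, which I would construct using the $\mathrm{max}$-retraction $r:\Dl[1]\times\Dl[1]\to\Dl[1]$ and the preceding lemma about simplicial homotopies. For the inverse, the natural candidate is $m_A$ from Definition \ref{signoMenos}; I would show that the composite $\Sigma A\xrightarrow{d_A}\Sigma A\sqcup\Sigma A\xrightarrow{Id\sqcup m_A}\Sigma A\sqcup\Sigma A\xrightarrow{\nabla}\Sigma A$ factors through $\ast$ by identifying it, via Lemma \ref{CilindroIterado} applied to the iterated cylinder $Cyl(CA\leftarrow A\rightarrow CA)$, with the composition of two consecutive arrows in a cofiber sequence. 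Coassociativity would reduce, after twisting by $Id\sqcup Id\sqcup m$, to two presentations of a single map out of $\mbf{s}C(\delta')$, where $\delta'=(d^0,d^1):A\sqcup A\to cyl(A)$; both sides become the composition of the obvious structural map with a common identification.

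For the abelian statement when $k\geq 2$, I would note that $d_{\Sigma A}$ coincides with $\Sigma d_A$ (again via Lemma \ref{CilindroIterado} applied to $\Lambda Cyl\simeq Cyl\Lambda$), so it suffices to check cocommutativity of $d_{\Sigma A}$. This follows from the standard Eckmann-Hilton type argument: naturality of $d_{-}$ forces the two group-like structures on $Hom_{Ho\mc{D}}(\Sigma^2 A,-)$ induced by $d_{\Sigma A}$ and $\Sigma d_A$ to be compatible, hence commutative. Finally, for the coaction on $c(f)$, I would replace $\Omega$ by a relative analog $\overline{\Omega}$, built as the pushout of $\Dl[1]$ with $\Dl[1]$ along a single endpoint (so that one copy of $\Dl[1]$ survives), with a collapse map $\overline{\alpha}:\overline{\Omega}\to\Dl[1]$ and a map $\overline{\pi}:\overline{\Omega}\to S^1\vee\Dl[1]$. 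Tensoring with $A$ and noting that $C(f)$ is the pushout of $A\to CA$ along $f$, the resulting map extends over $c(f)$ to produce a coaction $w_f:c(f)\to\Sigma A\sqcup c(f)$, satisfying counit and coassociativity by the same type of argument used for $d_A$.

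The main obstacle will be the verification of the inverse law, because this is where the lack of an explicit additive structure really bites: everything must be done by displaying the composite as null-homotopic via a \emph{cofiber sequence}, which forces a careful bookkeeping of which simplicial identifications take place in $\simp\mc{D}$ and which only in $Ho\mc{D}$ (using Lemma \ref{CilindroIterado} and the compatibility conditions relating $\mu$ with $\lambda$). Coassociativity, although formally similar, is more tedious because three-fold iterated cylinders appear, and one must check that the two ways of decomposing $C(\delta')$ give the same map after simplification.
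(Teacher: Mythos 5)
Your proposal reproduces essentially the construction the paper itself uses in Proposition \ref{A1cogrupo}, which is exactly the ``alternative description'' of $\varphi$ that this remark refers to: the zig-zag $S^1\leftarrow\Omega\rightarrow S^1\vee S^1$ via $\alpha$ and $\pi$, the unit law via the $\mathrm{max}/\mathrm{min}$ simplicial homotopies, the inverse law via $m_A$ and the iterated cylinder $C(\delta')$ identified with consecutive maps of a cofiber sequence, the Eckmann--Hilton argument for cocommutativity of $\Sigma^2 A$, and the coaction via $\overline{\Omega}$. So the approach is correct and matches the paper's own treatment; no substantive differences to report.
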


\begin{obs} We can redefine cofiber sequences (\ref{distingTriangl}), by forgetting $p:c(f)\rightarrow \Sigma A$ and considering instead the previous coaction of $\Sigma A$ over $c(f)$. It can be proved that the reformulations of TR1 to TR4 by means of coactions still hold for this new cofiber sequences, but we do not go into this task here.
\end{obs}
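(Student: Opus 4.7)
The plan is to set up a dictionary between the two formulations of cofiber sequences, and then transfer each axiom through it. By a coaction-cofiber sequence I will mean a triple $A\stackrel{u}{\rightarrow} B\stackrel{v}{\rightarrow} C$ together with a map $\nu:C\rightarrow \Sigma A\sqcup C$ in $Ho\mc{D}$ satisfying $\pi_2\nu=Id_C$ and the usual coaction axioms relative to the cogroup structure $(\Sigma A,d_A)$ of Proposition \ref{A1cogrupo}. The old boundary map is recovered as $p=\pi_1\nu$, and conversely on the standard model $C=c(f)$ the coaction is the canonical $w_f$ constructed in Proposition \ref{A1cogrupo}. This bijection on standard models, together with the isomorphism invariance of both notions, is what allows the axioms to transfer.

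Axioms TR 1 and TR 3 come essentially for free. For TR 1, the trivial sequence $A\stackrel{Id}{\rightarrow}A\rightarrow\ast$ admits the unique coaction to $\Sigma A\sqcup\ast=\Sigma A$, every $f$ extends via $w_f$, and isomorphism invariance is part of the definition. For TR 3, given the morphism $\gamma$ produced in Proposition \ref{TR3}, what must be added is that $\gamma$ intertwines the coactions, i.e.\
$$(\Sigma\alpha\sqcup \gamma)\circ w_f \;=\; w_{f'}\circ\gamma \quad \mbox{in } Ho\mc{D}.$$
Since $w_f$ is defined from the simplicial construction $\overline{\Omega}\otimes A$ used in the proof of Proposition \ref{A1cogrupo}, and $\gamma$ is induced by applying the cone functor, this identity is the naturality of the functor $\mbf{s}(\overline{\Omega}\otimes -)$ applied to $\gamma$.

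The real content lies in TR 2, whose coaction reformulation asserts that if $(A\stackrel{u}{\rightarrow}B\stackrel{v}{\rightarrow}C,\nu)$ is a coaction-cofiber sequence, then so is $(B\stackrel{v}{\rightarrow}C\stackrel{\pi_1\nu}{\rightarrow}\Sigma A,\nu')$, where
$$\nu' \;=\; (m_B\Sigma u \sqcup Id_{\Sigma A})\circ d_A:\Sigma A\rightarrow \Sigma B\sqcup \Sigma A.$$
I would reduce to the model case $C=c(u)$ and upgrade the isomorphism of triangles produced in the proof of Proposition \ref{TR2} to an isomorphism of coaction-cofiber sequences. Concretely, the coaction on $\Sigma A$ in the rotated sequence pulls back, via Lemma \ref{sCyls}, to the simplicial coaction $\overline{\Omega}\otimes A\rightarrow S^1\vee(\overline{\Omega}\otimes A)$; identifying this with $d_A$ twisted by the minus-sign $m_B$ and the map $\Sigma u$ is a diagrammatic check that only uses Lemma \ref{CilindroIterado} to permute coproduct factors.

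For TR 4, once TR 2 and TR 3 are available in coaction form, the octahedral construction of the proof of that axiom produces compatible coactions on $c(u)$, $c(vu)$, and $c(v)$. The required compatibility along the octahedral arrows reduces to the fact that $\overline{\Omega}\otimes(-)$ commutes with the pushouts defining the three cones, which holds at the simplicial level. The main obstacle throughout is TR 2: tracking the sign $m_B$ and the comultiplication $d_A$ through the identifications of Lemmas \ref{sCyls} and \ref{CilindroIterado} and Proposition \ref{TR2} is delicate, which is presumably why the author of the remark declines to carry it out in full.
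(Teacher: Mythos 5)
You are proposing a proof for a statement the paper explicitly declines to prove (``we do not go into this task here''), so there is no argument of the author's to compare yours against; I can only judge the plan on its own terms. Your dictionary --- coaction-cofiber sequences as triples isomorphic to the standard models $(c(f),w_f)$ of Proposition \ref{A1cogrupo}, with the old boundary recovered as $p=\pi_1 w_f$ and the rotated coaction $\nu'=(m_B\Sigma u\sqcup Id)d_A$ --- is the standard reformulation (in the spirit of Quillen's treatment of cofibration sequences in pointed model categories), and your division of labour is the right one: TR1 and isomorphism-invariance by fiat, TR3 by naturality, TR2 as the genuine content, TR4 from the other two.

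Two places where the sketch is thinner than it should be. First, $p=\pi_1 w_f$ is itself a claim needing proof: $p$ is defined as the cone of the morphism $(A\stackrel{f}{\rightarrow}B)\rightarrow(A\rightarrow\ast)$, whereas $\pi_1 w_f$ is assembled from $\overline{\pi}:\overline{\Omega}\rightarrow S^1\vee\Dl[1]$ and the zig-zag through $\mbf{s}(\overline{\alpha}\otimes A)^{-1}$; matching the two involves exactly the orientation bookkeeping (which vertex of $\Dl[1]$ is collapsed where) that produces the sign $m$ in Proposition \ref{TR2}, so the identity could a priori only hold up to $m_A$, and this must be pinned down before the rest of the translation makes sense. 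Second, your justification of TR3 by ``naturality of $\mbf{s}(\overline{\Omega}\otimes -)$'' covers only the case where $\alpha,\beta$ are maps of $\mc{D}$ with $\beta f=g\alpha$ strictly; the proof of Proposition \ref{TR3} passes through two further reductions (commutativity up to homotopy, then left fractions via Theorem \ref{MorfHoD}), and in each the intermediate comparison equivalences must also be checked to intertwine the coactions. Neither point looks like an obstruction, but both belong to the ``delicate diagrammatic check'' you defer, so your text should be read as a credible programme rather than a completed proof --- which, to be fair, is no more than the remark itself claims.
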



\section{Stable simplicial descent categories.}

\begin{defi} A simplicial descent category $(\mc{D},\mrm{E},\mbf{s},\mu,\lambda)$ is called \textit{stable} if the induced suspension functor $\Sigma:Ho\mc{D}\rightarrow Ho\mc{D}$ is an equivalence of categories.
\end{defi}

\begin{prop} If $\mc{D}$ is a stable simplicial descent category then $Ho\mc{D}$ is an additive category.
\end{prop}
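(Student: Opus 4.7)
The plan has three main steps. First, I would deduce pointedness from stability; second, I would use Proposition~\ref{A1cogrupo} to equip every object of $Ho\mc{D}$ with a canonical abelian cogroup structure; and third, I would assemble these structures into an additive category structure in the standard way.

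For pointedness, I would compute $\Sigma 0$ directly. By definition, $\Sigma 0=cyl(\ast\leftarrow 0\rightarrow\ast)=\mbf{s}Cyl(\ast\times\Dl\leftarrow 0\times\Dl\rightarrow\ast\times\Dl)$; in each degree the simplicial object is $\ast\sqcup 0\sqcup\ast\cong\ast\sqcup\ast$ (since $0$ is initial), and the simplicial structure maps are forced to be identities. Hence axioms $\mathbf{(S\; 3)}$ and $\mathbf{(S\; 5)}$ yield $\Sigma 0\cong\ast\sqcup\ast$ in $Ho\mc{D}$. On the other hand, being an equivalence of categories, $\Sigma$ preserves the initial object, so $\Sigma 0\cong 0$. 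Combining these, $\ast\sqcup\ast\cong 0$ in $Ho\mc{D}$, and for every $X$, $|\mathrm{Hom}_{Ho\mc{D}}(\ast,X)|^2=|\mathrm{Hom}_{Ho\mc{D}}(\ast\sqcup\ast,X)|=|\mathrm{Hom}_{Ho\mc{D}}(0,X)|=1$. Thus $\mathrm{Hom}_{Ho\mc{D}}(\ast,X)$ is a singleton, $\ast$ is initial, and $0\cong\ast$ in $Ho\mc{D}$.

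Next, I would invoke Proposition~\ref{A1cogrupo}. As $\Sigma^2:Ho\mc{D}\rightarrow Ho\mc{D}$ is an equivalence, every object $X$ admits an isomorphism $X\cong\Sigma^2 A$ for some $A$, along which the comultiplication $d_{\Sigma^2 A}$ transports to $d_X:X\rightarrow X\sqcup X$, giving $X$ the structure of an abelian cogroup object. The choice of $A$ would be immaterial: any two isomorphisms $\Sigma^2 A\rightarrow X$ differ by an automorphism of $\Sigma^2 A$, and naturality of $d_{-}$ (noted in the proof of Proposition~\ref{A1cogrupo}) ensures that the transported structures agree. Consequently each $\mathrm{Hom}_{Ho\mc{D}}(X,Y)$ acquires an abelian group structure via $f+g=\delta_Y\circ(f\sqcup g)\circ d_X$, with zero element the composition $X\rightarrow\ast\cong 0\rightarrow Y$; naturality of $d_{-}$ in $X$ and functoriality of the coproduct in $Y$ then give bilinearity of composition.

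Finally, I would promote the coproducts of Corollary~\ref{HoDCopro} to biproducts. The zero object $\ast\cong 0$ provides retractions $X\sqcup Y\rightarrow X$ and $X\sqcup Y\rightarrow Y$ assembled from $\mathrm{id}$ and the zero morphism, and the standard matrix-calculus argument in a pointed $\mathbf{Ab}$-enriched category with bilinear composition shows that these exhibit $X\sqcup Y$ as a categorical product, completing the additive structure. I expect the main obstacle to be the first step---extracting pointedness from stability alone---because the cogroup machinery of Section~4 presupposes it; once that is in place, the remaining work is largely formal categorical algebra.
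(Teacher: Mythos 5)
Your proposal is correct and follows essentially the same route as the paper: deduce pointedness from $\Sigma 0\cong\ast\sqcup\ast$ together with $\Sigma 0\cong 0$, then use Proposition~\ref{A1cogrupo} and the equivalence $\Sigma$ to make every object a natural (abelian) cogroup, and conclude additivity formally. The only cosmetic differences are that you identify $0\cong\ast$ by a hom-set counting argument where the paper composes with an inclusion $\ast\rightarrow\ast\sqcup\ast$ to produce a map $\ast\rightarrow 0$, and you transport structure along $\Sigma^2$ to get abelianness directly rather than leaving it to the formal argument.
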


\begin{proof} Let us see first that stability implies that $Ho\mc{D}$ is pointed. Note that the initial and final objects of $\mc{D}$ are so in $Ho\mc{D}$ by proposition \ref{MorfHoD}. As $\Sigma$ is an equivalence of categories, then $\Sigma 0$ should be isomorphic to $0$. But $\Sigma 0$ is $\ast\sqcup \ast$ by definition. Composing with an inclusion $\ast\rightarrow \ast \sqcup\ast$ we get a map $\ast\rightarrow 0$ in $Ho\mc{D}$. As compositions $0\rightarrow \ast\rightarrow 0$ and $\ast\rightarrow 0\rightarrow\ast$ are identities then $0\equiv\ast$ in $Ho\mc{D}$.\\
Since $\Sigma$ is an equivalence, it follows from proposition \ref{A1cogrupo} that each object in $Ho\mc{D}$ is an (abelian) cogroup object in a natural way. But this formally implies that $Ho\mc{D}$ is additive. Indeed, the sum of two morphisms $\Sigma f,\Sigma g:\Sigma A\rightarrow \Sigma B$ is
$$\Sigma A \stackrel{d_A}{\rightarrow} \Sigma A\sqcup\Sigma A\stackrel{\Sigma f\sqcup \Sigma g}{\longrightarrow} \Sigma B\sqcup\Sigma B\stackrel{\varsigma}{\rightarrow} \Sigma B$$
where $\varsigma$ denotes the codiagonal of $\Sigma B$.
\end{proof}

The previous proposition and theorem \ref{struct triangulada} give rise to the following result.

\begin{thm}\label{StablTriang} If $\mc{D}$ is a stable simplicial descent category then $Ho\mc{D}$ is a triangulated category. In addition, a descent functor $F:\mc{D}\rightarrow \mc{D}'$ induces a functor of triangulated categories $F:Ho\mc{D}\rightarrow Ho\mc{D}'$.
\end{thm}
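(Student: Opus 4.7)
The strategy is to assemble the pieces already available: the preceding proposition gives additivity of $Ho\mc{D}$, Theorem \ref{struct triangulada} gives TR1, TR3, TR4 and a one-sided TR2 in the form $B\to C\to \Sigma A\stackrel{m_B\Sigma u}{\to}\Sigma B$, and stability gives invertibility of $\Sigma$. I only need to upgrade the non-stable axiom to Verdier's TR2 and to check that the cofiber sequences in the sense of Section \ref{Cofsequenc}, taken as the class of distinguished triangles, genuinely form a triangulated structure on the now-additive category $Ho\mc{D}$.

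First I would fix the distinguished triangles of $Ho\mc{D}$ to be the cofiber sequences of Section \ref{Cofsequenc}. Since $Ho\mc{D}$ is additive, the argument given in the corollary following the definition of additive simplicial descent category (using the two compatible cogroup/group structures on $Hom_{Ho\mc{D}}(\Sigma B,\Sigma B)$ via Proposition \ref{A1cogrupo}) applies verbatim to show that $m_B=-Id_{\Sigma B}$ in $Ho\mc{D}$. Therefore the non-stable TR2 of Proposition \ref{TR2} becomes the classical statement: if $A\stackrel{u}{\to} B\stackrel{v}{\to} C\stackrel{w}{\to}\Sigma A$ is distinguished, then so is $B\stackrel{v}{\to} C\stackrel{w}{\to}\Sigma A\stackrel{-\Sigma u}{\to}\Sigma B$. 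This handles the forward rotation.

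For the backward rotation (the part of TR2 that actually uses stability) I would argue as follows. Given a distinguished triangle $B\stackrel{v}{\to}C\stackrel{w}{\to}\Sigma A\stackrel{z}{\to}\Sigma B$, stability means $\Sigma:Ho\mc{D}\to Ho\mc{D}$ is an equivalence, so the map $-z$ is of the form $\Sigma u$ for a unique $u:A\to B$ in $Ho\mc{D}$. By TR1 (iii), $u$ fits into a cofiber sequence $A\stackrel{u}{\to}B\stackrel{v'}{\to}C'\stackrel{w'}{\to}\Sigma A$; applying the forward rotation gives $B\stackrel{v'}{\to}C'\stackrel{w'}{\to}\Sigma A\stackrel{-\Sigma u}{\to}\Sigma B$. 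Both triangles have third map $-\Sigma u=z$, so TR3 together with the standard five-lemma argument in the additive setting produces an isomorphism $C\to C'$ completing an isomorphism of triangles, whence the original triangle is isomorphic to $B\stackrel{v'}{\to}C'\stackrel{w'}{\to}\Sigma A\stackrel{-\Sigma u}{\to}\Sigma B$ and hence (rotating backwards once more and using forward TR2 twice) is the rotation of $A\stackrel{u}{\to}B\to C\to \Sigma A$. Combined with TR1, TR3, TR4 from Theorem \ref{struct triangulada}, this gives all of Verdier's axioms.

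For the descent-functor statement, Theorem \ref{struct triangulada} already says that $F$ preserves cofiber sequences, so it sends distinguished triangles in $Ho\mc{D}$ to distinguished triangles in $Ho\mc{D}'$. It remains to observe that $F$ is additive on $Ho\mc{D}$: this follows from condition II in the definition of descent functor, which says that the canonical map $F(A)\sqcup F(B)\to F(A\sqcup B)$ is an equivalence, so $F$ preserves finite coproducts in the homotopy categories, and both $Ho\mc{D}$ and $Ho\mc{D}'$ are additive. Finally, the natural isomorphism $\Theta:\psi\mbf{s}\to \mbf{s}'\psi$ yields a natural isomorphism $F\Sigma\simeq \Sigma' F$ compatible with the cone construction, so $F$ is a triangulated functor.

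The main obstacle is the backward rotation step: one needs stability precisely to desuspend $-z$, and then TR3 plus additivity to promote the resulting candidate into a genuine isomorphism of triangles. Everything else is a direct reassembly of results already proved.
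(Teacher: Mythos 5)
Your proposal is correct and takes essentially the same approach as the paper, whose proof of this theorem is simply the assembly of the additivity proposition with Theorem \ref{struct triangulada}. The details you add (that $m_B=-Id_{\Sigma B}$ once $Ho\mc{D}$ is additive, which is immediate from Proposition \ref{A1cogrupo} since the addition on maps out of $\Sigma B$ is the one induced by $d_B$, and the backward rotation obtained from stability, TR1, TR3 and the five-lemma argument) are exactly the standard verifications the paper leaves implicit.
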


\begin{cor}\label{cORStablTriang} Let $\mc{D}$ be a stable simplicial descent category such that $\Sigma:\mc{D}\rightarrow \mc{D}$ is an equivalence of categories. Then, if $I$ is a small category, the category of diagrams $(I,\mc{D})$ is a stable simplicial descent category. In particular, $Ho(I,\mc{D})$ is a Verdier's triangulated category, and any functor $f:I\rightarrow J$ induces a triangulated functor $f^\ast: Ho(J,\mc{D})\rightarrow Ho(I,\mc{D})$.
\end{cor}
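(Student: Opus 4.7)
The plan is to equip $(I,\mc{D})$ with the \emph{pointwise} simplicial descent structure, reduce each axiom to its counterpart in $\mc{D}$, show that the stronger stability hypothesis on $\mc{D}$ forces stability of $(I,\mc{D})$, and then invoke Theorem \ref{StablTriang} both for the triangulation of $Ho(I,\mc{D})$ and for the triangulated character of $f^\ast$.

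First I would take $\mrm{E}_I$ to be the class of pointwise equivalences and define the simple functor $\mbf{s}_I:\simp(I,\mc{D})\rightarrow (I,\mc{D})$ pointwise by $(\mbf{s}_I X)(i)=\mbf{s}(X_\bullet(i))$, and likewise set $\mu_I$ and $\lambda_I$ to be the pointwise versions of $\mu$ and $\lambda$. Since finite coproducts, final and initial objects of $(I,\mc{D})$ are computed pointwise, axioms $\mathbf{(S\,1)}$--$\mathbf{(S\,3)}$ are immediate from the corresponding axioms in $\mc{D}$. For $\mathbf{(S\,4)}$, given $Z\in\simp\simp(I,\mc{D})$, the comparison $\mbf{s}_I\mrm{D}Z\rightarrow\mbf{s}_I\mbf{s}_IZ$ is pointwise the map $\mu$ for $Z(i)\in\simp\simp\mc{D}$; analogously $\lambda_I$, $\rho_I$ in $\mathbf{(S\,5)}$ and the compatibility (\ref{compatibLambdaMuEquac}) are read off pointwise. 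Axioms $\mathbf{(S\,6)}$--$\mathbf{(S\,8)}$ also reduce to pointwise statements since equivalences, cones and $\Upsilon$ all commute with evaluation at $i$.

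The key step is stability. Cones and the action of $\simp Set$ are computed pointwise in $(I,\mc{D})$, so the induced suspension satisfies $(\Sigma_I F)(i)=\Sigma(F(i))$, i.e.\ $\Sigma_I$ is just post-composition with $\Sigma:\mc{D}\rightarrow\mc{D}$. Since the hypothesis is that $\Sigma:\mc{D}\rightarrow\mc{D}$ is an equivalence of categories at the level of $\mc{D}$ itself (not just after localization), a quasi-inverse $\Omega:\mc{D}\rightarrow\mc{D}$ together with its unit and counit post-composes to a quasi-inverse $\Omega_I$ of $\Sigma_I$, so $\Sigma_I$ is an equivalence on $(I,\mc{D})$, a fortiori on $Ho(I,\mc{D})$. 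Therefore $(I,\mc{D})$ is a stable simplicial descent category and Theorem \ref{StablTriang} provides the triangulated structure on $Ho(I,\mc{D})$.

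Finally, for $f:I\rightarrow J$, the restriction $f^\ast:(J,\mc{D})\rightarrow (I,\mc{D})$ is a descent functor essentially by inspection: it preserves pointwise equivalences, the natural map $f^\ast F\sqcup f^\ast G\rightarrow f^\ast(F\sqcup G)$ is the identity at each $i\in I$, the comparison $f^\ast\mbf{s}_J=\mbf{s}_If^\ast$ is a strict identity, and the compatibilities with $\lambda$, $\mu$ and the final object hold tautologically. Hence, again by Theorem \ref{StablTriang}, $f^\ast:Ho(J,\mc{D})\rightarrow Ho(I,\mc{D})$ is a triangulated functor. The only step that is not merely bookkeeping is the transfer of stability, and that is precisely why the corollary requires $\Sigma$ to be an equivalence of $\mc{D}$ itself rather than of $Ho\mc{D}$.
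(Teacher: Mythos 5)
Your proposal follows essentially the same route as the paper: equip $(I,\mc{D})$ with the object-wise descent structure, observe that $f^\ast$ is a descent functor, transfer stability by post-composing with a quasi-inverse of $\Sigma:\mc{D}\rightarrow\mc{D}$, and then quote Theorem \ref{StablTriang}. The verification of the axioms and the claim about $f^\ast$ are fine and are exactly what the paper asserts.

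The one inference that is not valid as written is ``$\Sigma_I$ is an equivalence on $(I,\mc{D})$, a fortiori on $Ho(I,\mc{D})$''. An equivalence of the underlying category does not automatically induce an equivalence after localization: you need the quasi-inverse $\Sigma^{-1}$ (and hence $\Sigma^{-1}_I$) to preserve the (object-wise) equivalences, so that it descends to $Ho(I,\mc{D})$ and the unit and counit isomorphisms can be transported there; without this, full faithfulness of the induced functor can genuinely fail. This is precisely the point the paper's proof makes explicit when it says that $\Sigma^{-1}:\mc{D}\rightarrow\mc{D}$ provides a functor on $(I,\mc{D})$ \emph{preserving object-wise equivalences}. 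The missing step is easy to supply from your hypotheses: since $\mrm{E}$ is saturated and $\mc{D}$ is stable, $\Sigma:Ho\mc{D}\rightarrow Ho\mc{D}$ is an equivalence and therefore reflects isomorphisms, so $\Sigma g\in\mrm{E}$ implies $g\in\mrm{E}$; applying this to $g=\Sigma^{-1}f$ for $f\in\mrm{E}$, together with $\Sigma\Sigma^{-1}f\cong f$ and Lemma \ref{C3} for the fact that $\Sigma$ itself preserves $\mrm{E}$, shows that $\Sigma^{-1}$ preserves $\mrm{E}$, hence $\Sigma^{-1}_I$ preserves object-wise equivalences and yields the desired inverse of $\Sigma_I$ on $Ho(I,\mc{D})$. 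With that sentence added, your argument coincides with the paper's proof.
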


\begin{proof} If $\mc{D}$ is a simplicial descent category, then $(I,\mc{D})$ inherits a simplicial descent category defined object-wise. In addition, a functor $f$ as above induces a descent functor $f^\ast$. By assumption, $\Sigma^{-1}:\mc{D}\rightarrow\mc{D}$ provides ${\Sigma}^{-1}:(I,\mc{D})\rightarrow (I,\mc{D})$ preserving object-wise equivalences, for all $I$. Therefore, the induced functor $Ho(I,\mc{D})\rightarrow Ho(I,\mc{D})$ is an inverse of $\Sigma:Ho(I,\mc{D})\rightarrow Ho(I,\mc{D})$. Hence $(I,\mc{D})$ is stable of all $I$.
\end{proof}

The above corollary remains valid for a simplicial descent category $\mc{D}$ such that $\Sigma_{I}:Ho(I,\mc{D})\rightarrow Ho(I,\mc{D})$ is an equivalence of categories for all $I$. This holds, for instance, in the following case. Assume the quasi-inverse of $\Sigma$, $\Sigma^{-1}:Ho\mc{D}\rightarrow Ho\mc{D}$ comes from a functor $\underline{\Sigma}:\mc{D}\rightarrow\mc{D}$. Note that $\underline{\Sigma}$ may not be a quasi-inverse of $\Sigma:\mc{D}\rightarrow\mc{D}$. Moreover, assume that the isomorphisms $\alpha:\Sigma\Sigma^{-1}\simeq Id_{Ho\mc{D}}$ and $\beta:\Sigma\Sigma^{-1}\simeq Id_{Ho\mc{D}}$ are isomorphisms of $Fun(\mc{D},\mc{D})[\mrm{E}^{-1}]$. In this case, $\Sigma_I$ is an equivalence of categories as well.

\begin{ejs}\mbox{}\\
\indent \textbf{DG-modules over a DG-category} The cosimplicial descent structure on the category of DG-modules over a fixed DG-category is based on the one of cochain complexes. We then recover the well-known triangulated structure of the derived category of DG-modules over a DG-category \cite{K}.

\textbf{Mixed Hodge complexes} Let $\mc{H}dg$ be the category of mixed Hodge complexes defined in \cite[4.8 and 4.11]{R1}. Consider mixed Hodge complexes
$K=((K_\mathbb{Q},\mrm{W}),(K_\mathbb{C},\mrm{W},\mrm{F}),\alpha)$, $S=((S_\mathbb{Q},\mrm{U}),(S_\mathbb{C},\mrm{U},\mrm{G}),\beta)$ and $T=((T_\mathbb{Q},\mrm{V}),(T_\mathbb{C},\mrm{V},\mrm{H}),\gamma)$. Given morphisms $T\stackrel{g}{\rightarrow}K\stackrel{f}{\leftarrow}S$, then $P=path(f,g)$ is the mixed Hodge complex $((P_\mathbb{Q},\mrm{N}),(P_\mathbb{C},\mrm{N},\mrm{M}),\delta)$ where
$$P^n_\ast=S^n_\ast\oplus K^{n-1}_\ast\oplus T^n_\ast \ \ \ \ \mrm{N}_kP^n_\ast=\mrm{U}_k S^n_\ast\oplus \mrm{W}_{k+1}K_\ast^{n-1}\oplus \mrm{V}_kT^n_\ast\mbox{ for }\ast=\mathbb{Q},\mathbb{C}$$
$$\mrm{M}^kP^n_\mathbb{C}=\mrm{G}^k S^n_\mathbb{C}\oplus \mrm{F}^{k}K_\mathbb{C}^{n-1}\oplus \mrm{H}^kT^n_\mathbb{C}$$
and $\delta$ is the direct sum of $\beta$, $\alpha$ and $\gamma$.\\
The cofiber sequences defined through the $path$ functor induce a left triangulated structure on the derived category $\mc{H}dg[\mrm{E}^{-1}]$, $\mrm{E}$=quasi-isomorphisms. As in the case of filtered complexes, it becomes a Verdier's triangulated category if we consider bounded-below cochain complexes. This triangulated structure is related to the one given in \cite{Be} and \cite{H} (recall that a mixed Hodge complex in our sense becomes a mixed Hodge complex in the sense of loc. cit. after applying decalage functor $Dec$ to the weight filtration $\mrm{W}$).

\textbf{Fibrant spectra} The category $Sp$ of fibrant spectra has a structure of cosimplicial descent category where the simple functor is the homotopy limit. In the proof of \cite[proposition 5.14]{R1} it is checked that the resulting fiber sequences are the usual `homotopy fiber sequences' coming from the Quillen model structure on $Sp$. Therefore, we deduce the classical triangulated structure of the stable homotopy category of fibrant spectra.
\end{ejs}

\end{document}